\theoremstyle:=definition,remark,plain\do{%
        \expandafter\g@addto@macro\csname th@\theoremstyle\endcsname{%
            \addtolength\thm@preskip\parskip
            }%
        }
\newtheorem{theorem}{Theorem}
\newtheorem{lemma}{Lemma}
\newtheorem{remark}{Remark}
\newtheorem{proposition}{Proposition}
\newtheorem{definition}{Definition}
\newtheorem{exam}{Example}
\newenvironment{keywords}{}{}
\renewcommand{\part}[2]{\subsubsection*{Part #1: {\normalfont #2.}} }
\newcommand{\ab}[1]{\operatorname{ab} #1}
\newcommand{\rann}[1]{\operatorname{Ann}_R #1}
\renewcommand{\hom}{\operatorname{Hom}}
\newcommand{\iso}{\simeq}
\newcommand\blfootnote[1]{%
  \begingroup
  \renewcommand\thefootnote{}\footnote{#1}%
  \addtocounter{footnote}{-1}%
  \endgroup
}
\title{Classification of algebras of level two in the variety of nilpotent algebras and Leibniz algebras}
\author{
    James Francese\thanks{
        Fullerton College,
        Fullerton, CA, US 92832,
        jamesafrancese@gmail.com
    }
    \and
    Abror Khudoyberdiyev\thanks{
        National University of Uzbekistan, Institute of Mathematics,
        Tashkent, Uzbekistan 100125,
        khabror@mail.ru
    }
    \and
    Bennett Rennier\thanks{
        University of Oklahoma,
        Norman, OK, US 73019,
        bennett@brennier.com
    }
    \and
    Anastasia Voloshinov\thanks{
        Pomona College,
        Claremont, CA, USA 91711,
        nastiav13@gmail.com
    }
    \renewcommand\footnotemark{}
    \blfootnote{This work was supported by NSF award number 1658672.}
}
\begin{document}
\newpage
\maketitle

\begin{abstract}
    \noindent{}This paper is devoted to the description of complex finite-dimensional algebras of level two.
    We obtain the classification of algebras of level two in the variety of Leibniz algebras. It is shown
    that,  up to isomorphism, there exist three Leibniz algebras of level two, one of which is solvable, and
    two of which are nilpotent. Moreover we describe all algebras of level two in the variety of nilpotent
    algebras.
\end{abstract}

\begin{keywords}
    Keywords: Leibniz algebra, nilpotent algebra, degenerations, level of algebra
\end{keywords}

\section{Introduction and Background}

The theory of deformations and degenerations of algebras has its origins in certain formal relations between
physical theories, and has become a lively subject in algebraic and differential geometry, as well as
noncommutative and nonassociative algebra. It was a very influential precept of Niels Bohr that a new
physical theory, which is supposed to ontologically overlap with a previously accepted theory, should
somehow yield the old theory as a limiting or special case \cite{corrPrin}. This is a statement of his
``correspondence principle", which is realized in quantum mechanics via the limit of the Moyal bracket:
\begin{displaymath}
    [f, g ] = \{f, g\} + \mathcal{O}(\hbar ^2)
\end{displaymath}
as $\hbar \rightarrow 0$. Here the bracket $\{ ~ , ~ \} = \partial^i \partial_i - \partial_i \partial ^i $
is the Poisson bracket of classical mechanics, being a sum of commutators for first-order differential
operators. In this way classical mechanics emerges, as a limiting case, from quantum mechanics for small
values of $\hbar$. This realization motivated the modern deformation theory of algebras, which originated
with Gerstenhaber \cite{ringDeformations} and others. This theory has powerful applications in the
classification of algebraic varieties and the quantization theory of Poisson manifolds, where the physical
meaning of deformation remains especially explicit \cite{quant, germdef}. There we have a manifold with
tangent space $V$ equipped with a Poisson bivector $\mathfrak{p} \in V \wedge V^{*}$, and an algebra of
observables $f, g \in C^{\infty} (V)$. We then define a product of these observables as a power series in a
parameter $\hbar$:
\begin{displaymath}
    (f \star g) (\mathfrak{p}):= f \cdot g + \hbar \mathfrak{p}^i _j \frac{\partial f}{\partial x^i}
    \frac{\partial g}{\partial x_j} + \frac{1}{2} \hbar ^2 \mathfrak{p}^k _{\ell} \mathfrak{p}^i _j
    \frac{\partial ^ 2 f}{\partial x^k \partial x^i } \frac{\partial ^ 2 g }{\partial x_{\ell} \partial x
    _j} + \ldots
\end{displaymath}
where we have made use of the summation convention. Defining the bracket
\begin{displaymath}
    [f, g ] := -i\hbar ^{-1} (f \star g - g \star f),
\end{displaymath}
in the ordinary quantum mechanical case for a single state (i.e. where all differential operators are
first-order) we recover the above limit, relating the Moyal to the Poisson bracket. The full higher-order
approach is currently being used in the study of formal quantum field theory \cite{AQFT, aquant},
a version of Hilbert's 6th problem \cite{hil}.

For this paper, we consider the case of a finite-dimensional algebra over a closed field \( k \), with $k =
\mathbf{C}$ being of special focus. In general, an algebra over $k$ can be considered as an element \( \mu
\in \hom(V \otimes V, V) \), where \( V \) is an \(n\)-dimensional \(k\)-vector space. Thus, in the purely
abstract deformation theory we consider algebras $(A, \mu), (A_t, \mu_t) \in V \otimes k [ [t]]$ related by
a formal power series:
\begin{displaymath}
    \mu_t = \mu + \sum_{i=1} ^{\infty} t^i \varphi_i \hspace{0.3cm} \text{where} \hspace{0.2cm} \varphi_{i} \in \hom(V \otimes V, V)
\end{displaymath}
so that in the linear case $\mu_{t, 1} = \mu + t \varphi_1$ we study algebras with multiplication differing by
a $2$-coboundary.\footnote{An abstract ``Poisson bracket" on an algebra is always available in the case of a
linear deformation, since we can define $\{ x, y \} := \frac{1}{2}(\mu_{t, 1}(x, y) - \mu_{t, 1} (y, x) )$.}
Kodaira and Spencer gave the original idea of infinitesimal deformations for complex analytic manifolds
\cite{infinitesimalDeformations}. Most notably, they proved that infinitesimal deformations can be
parametrized by a related cohomology group. In fact, cohomology detects deformation at all orders, and it is
therefore unsurprising that one can develop the deformation theory in any abelian category \cite{abcat}.

Prior to the development of deformation theory, it had already been realized that the space and time
symmetries of Newtonian mechanics were recovered in the $c \rightarrow \infty$ limit of special relativity,
were $c$ is the speed of light. In that case the Lie algebra of the Poincare group \textit{degenerates} to
the Lie algebra of the Galilean group, an observation first made by Wigner and \.{I}n\"{o}n\"{u} in
\cite{Wignercon}. This is a geometric process, and can be defined purely in terms of the Zariski topology on
\( \hom(V \otimes V,V) \).

In the finite-dimensional case over a field of characteristic zero, such degenerations can be described directly in terms of the singular limit of a linear group action. Let $\text{Alg}_n (k)$ be the variety of $n$-dimensional algebras over $k$, and let $(A, \lambda), (A, \mu) \in \text{Alg}_n (k)$. Define an action on $\text{Alg}_n (k)$ by means of
\begin{displaymath}
    (g \ast \mu) (x, y) := g( \lambda (g^{-1}(x) , g^{-1}(y) )) \hspace{0.5cm} \text{where} \hspace{0.2cm} g
    \in \text{GL}_n (k), \hspace{0.2cm} x,y \in A
\end{displaymath}
which just represents a change of basis for $A$ as an algebra. Thus the orbit of the algebra $(A, \lambda)$ under this action is given by
\begin{displaymath}
    \text{Orb} (A) := \{ L \in \text{Alg}_n (k) \hspace{0.1cm} | \hspace{0.1cm} L \simeq A \}.
\end{displaymath}

\begin{definition}  An algebra $(A, \lambda)$ is said to \textit{degenerate} to the algebra $(A,\mu)$ if $\text{Orb}(A,\mu) \subseteq \overline{ \text{Orb}(A, \lambda)} $. We write $  \lambda \rightarrow \mu $ to denote this degeneration.
\end{definition}

In the case $k = \mathbb{C}$, we have that $\lambda \rightarrow \mu$ if and only if there is a $g_t \in GL_n (\mathbb{C}(t))$ such that $\forall x, y \in A$,
\begin{displaymath}
    \mu(x, y) = \lim\limits_{t \rightarrow 0} g_t (\lambda (g_t^{-1}(x),g_t^{-1}(y))).
\end{displaymath}

We call a degeneration $\lambda \rightarrow \mu$  \textit{trivial} if $(A, \lambda) \simeq (A, \mu)$, and \textit{direct} if it is non-trivial, and there is no algebra $(A, \nu)$ such that $\lambda \rightarrow \nu \rightarrow \mu$. If $\lambda \rightarrow \mu$, then $\lambda$ is a non-trivial deformation of $\mu$, thus it is common to pass from the degeneration theory to the deformation theory.

It is clear that every non-abelian algebra in $\text{Alg}_n (\mathbf{C})$ degenerates non-trivially to the abelian algebra $\ab_{n}$, but of course not all such degenerations will be direct; the distance of an algebra from $\ab_{n}$, in terms of the degeneration theory, is given by its \textit{level}.

\begin{definition}
    The level of an algebra \(\lambda\) is the maximum length of a chain of direct degenerations to
    $\ab_{n}$. We denote the level of an algebra by \( \text{lev}_{n}(\lambda) \).
\end{definition}

Concerning algebras of level one, we have the following result proved by Khudoyberdiyev and Omirov \cite{levelone}.

\begin{theorem}\label{omi-khudo}
    Let \( A \) be an algebra of level one. Then \( A \) is isomorphic to one of the following pairwise
    non-isomorphic algebras:
    \begin{alignat*}{3}
        p_{n}^{-} &: \qquad && e_1 e_{i} = e_{i}, \qquad &&e_{i} e_{1} = - e_{i}, \qquad 2 \leq i \leq n; \\
        n_{3}^{-} \oplus \ab_{n - 3} &: \qquad &&e_1 e_2 = e_3, \qquad &&e_2 e_1 = - e_3; \\
        \lambda_{2} \oplus \ab_{n - 2} &: \qquad &&e_1 e_1 = e_2; \\
        \nu_{n}(\alpha) &: \qquad &&e_1 e_1 = e_1, \qquad &&e_1 e_{i} = \alpha e_{i}, \qquad e_{i} e_1 =
        (1 - \alpha) e_{i}, \qquad2 \leq i \leq n.
    \end{alignat*}
\end{theorem}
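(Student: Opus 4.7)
My plan is to read the level-one condition geometrically: the algebra $\lambda$ has level one precisely when $\orb{\lambda}$ decomposes into exactly two $\text{GL}_n(k)$-orbits, namely $\text{Orb}(\lambda)$ and the single point $\ab_{n}$. Equivalently, $\lambda$ is a minimal non-abelian element in the closure-order on the orbit space of $\hom(V\otimes V, V)$. The argument then splits naturally into a sufficiency part (each listed algebra has this minimality property) and a necessity part (no other algebra does).

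\textbf{Sufficiency.} For each candidate $\lambda$ on the list, I would compute $\aut(\lambda)$ and hence pin down $\dim \text{Orb}(\lambda) = n^{2} - \dim \aut(\lambda)$. Then I would either write down a one-parameter family $g_{t} \in \text{GL}_{n}(\mathbb{C}(t))$ realizing $\lambda \to \ab_{n}$ directly and check via semi-continuous invariants that no orbit appears strictly in between, or compare $\orb{\lambda}$ against the list of small-dimensional $\text{GL}_n(k)$-orbit closures. The one-parameter family $\nu_{n}(\alpha)$ demands extra care: distinct parameters modulo the involution $\alpha \mapsto 1 - \alpha$, arising from swapping left and right multiplication, yield non-isomorphic algebras, and one must check that no degeneration $\nu_{n}(\alpha) \to \nu_{n}(\beta)$ exists between inequivalent parameters.

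\textbf{Necessity.} Conversely, assume $\lambda$ has level one, and exploit the upper semi-continuous invariants $\dim \lambda(V,V)$, $\dim \text{Ann}(\lambda)$, and the dimensions of the terms in the derived and lower central series; each weakly decreases under degeneration, which sharply constrains the structure of $\lambda$. Case-split on $\dim \lambda(V,V)$: if this equals one, choose a basis so that $\lambda(V,V) = \langle e_{n} \rangle$, write out the structure constants, and observe that the level-one condition forces $\lambda \simeq \lambda_{2} \oplus \ab_{n-2}$ or $\lambda \simeq n_{3}^{-} \oplus \ab_{n-3}$. If $\dim \lambda(V,V)$ is larger, then the existence of an idempotent-like element and the eigenspace decomposition of its left and right multiplications pushes $\lambda$ into the family $\nu_{n}(\alpha)$ or into $p_{n}^{-}$.

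\textbf{Main obstacle.} The hard part is proving completeness of the list. For each algebra $\mu$ not appearing above, one must exhibit an intermediate non-abelian $\nu \not\simeq \mu$ with $\mu \to \nu \to \ab_{n}$; producing such a witness explicitly for each excluded structural type is the computational heart of the proof. The invariant bookkeeping merely prunes the candidate space, while constructing the actual families $g_{t}$ realizing intermediate degenerations requires case-by-case ingenuity, particularly in the borderline regime where $\dim \lambda(V,V) = 1$ meets nontrivial nilpotency.
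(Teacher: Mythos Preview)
The paper does not prove this theorem; it is quoted as a prior result of Khudoyberdiyev and Omirov \cite{levelone}, so there is no proof in the present paper to compare your proposal against. Your outline is a plausible high-level strategy for such a classification, but as written it remains a sketch: the necessity direction in particular (``the existence of an idempotent-like element and the eigenspace decomposition \ldots\ pushes $\lambda$ into the family $\nu_{n}(\alpha)$ or into $p_{n}^{-}$'') would need substantial work to become an argument, and the sufficiency direction requires verifying for each listed algebra that its orbit closure contains no non-abelian orbit other than its own. If you want to compare approaches, consult \cite{levelone} directly.
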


The level two case, within the varieties of Lie, Jordan, and associative algebras, has been resolved by Khudoyberdiyev in \cite{leveltwo}. In particular, that paper provides the following theorem.

\begin{theorem}\label{khudo}
    Let \( G \) be a Lie algebra of level two. Then \( G \) is isomorphic to one of the following pairwise
    non-isomorphic algebras:
    \begin{alignat*}{3}
        n_{5, 1} \oplus \ab_{n-5}&: \qquad && e_1 e_3 = e_5, \qquad &&e_2 e_4 = e_5, \qquad 2 \leq i \leq n; \\
        n_{5, 2} \oplus \ab_{n - 5} &: \qquad &&e_1 e_2 = e_4, \qquad &&e_1 e_3 = e_5; \\
        r_{2} \oplus \ab_{n - 2} &: \qquad &&e_1 e_1 = e_2; \\
        g_{n, 1}(\alpha) &: \qquad &&e_1 e_2 = \alpha e_2, \qquad &&e_1 e_{i} = e_{i}, \qquad 3 \leq i \leq n, \alpha \in \mathbf{C}/\{0, 1 \};\\
        g_{n, 2} &: \qquad  && e_1 e_2 = e_2 + e_3, \qquad && e_1 e_i, \qquad \hspace{0.75cm} 3 \leq i \leq n.
            \end{alignat*}
\end{theorem}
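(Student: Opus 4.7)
The plan is to reformulate the level-two condition in orbit-theoretic terms and to leverage the classification of level-one algebras in Theorem \ref{omi-khudo}. Observe that $\text{lev}_n(\lambda) = 2$ if and only if $\lambda$ is non-abelian, not of level one, and every non-trivial degeneration $\lambda \to \mu$ produces $\mu$ with $\text{lev}_n(\mu) \le 1$. Equivalently, $\overline{\text{Orb}(\lambda)} \setminus \text{Orb}(\lambda)$ must be contained in the union of the $\text{GL}_n$-orbit closures of the level-one algebras $p_n^-$, $n_3^- \oplus \ab_{n-3}$, $\lambda_2 \oplus \ab_{n-2}$, $\nu_n(\alpha)$, together with the orbit of the abelian algebra $\ab_n$. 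The classification problem thus splits into (a) verifying that each algebra in the stated list satisfies this containment, and (b) proving the list is exhaustive.

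For (a), on each candidate $\lambda$ I would first realize a chain of direct degenerations of length two $\lambda \to \nu \to \ab_n$ with $\nu$ one of the level-one algebras, thereby establishing $\text{lev}_n(\lambda) \ge 2$. Suitable one-parameter subgroups $g_t \in \text{GL}_n(\mathbf{C}(t))$ are straightforward to write down: for $g_{n,1}(\alpha)$ and $g_{n,2}$ a diagonal rescaling yields $\nu_n(\beta)$ or $p_n^-$ as the intermediate algebra, while for $n_{5,1}$ and $n_{5,2}$ collapsing one product yields $n_3^- \oplus \ab_{n-3}$. The upper bound $\text{lev}_n(\lambda) \le 2$ requires ruling out proper degenerations of $\lambda$ with level at least two. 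For this I would compute $\dim \aut(\lambda)$, hence $\dim \text{Orb}(\lambda)$, and exploit the fact that lower-semicontinuous invariants such as $\dim \lambda^2$, $\dim Z(\lambda)$, and the dimension of the nilradical separate orbit closures and thus forbid degenerations of $\lambda$ into the orbit of any other candidate level-$\ge 2$ algebra.

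Step (b) is the main obstacle. First I would show that any Lie algebra with a non-trivial Levi factor has level at least three, since a semisimple summand carries independent directions of degeneration; this forces every level-two Lie algebra to be solvable. The solvable case then splits into a nilpotent subcase, which by bounds on nilpotency class and derived length reduces essentially to dimensions at most five (producing $n_{5,1}$, $n_{5,2}$, and $r_2 \oplus \ab_{n-2}$), and a non-nilpotent subcase, which reduces to the study of one-dimensional extensions of an abelian ideal and to a Jordan-normal-form analysis of the adjoint action, producing precisely the families $g_{n,1}(\alpha)$ and $g_{n,2}$. The delicate point is to verify that no sporadic solvable Lie algebra outside these families can have an orbit-closure boundary meeting only level-one strata and $\overline{\text{Orb}(\ab_n)}$; this is ultimately a dimension-and-invariant argument, applied to a short list of candidate isomorphism types of radicals, that rules out every alternative extension data.
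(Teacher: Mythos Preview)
This theorem is not proved in the present paper. It is quoted from \cite{leveltwo} as background (``The level two case, within the varieties of Lie, Jordan, and associative algebras, has been resolved by Khudoyberdiyev in \cite{leveltwo}. In particular, that paper provides the following theorem.''), and the paper supplies no argument for it of its own. Consequently there is no ``paper's own proof'' to compare your proposal against; the comparison you were asked to make is vacuous here.

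As for the proposal itself viewed as a standalone sketch: the overall architecture---reformulate level two via orbit closures, verify the candidates, and then show exhaustiveness by first killing the semisimple part and then running a Jordan-form analysis on the solvable radical---is the natural plan and broadly matches how such classifications are carried out. But several steps are only gestured at. The assertion that a nontrivial Levi factor forces level $\geq 3$ is not immediate (you would need an explicit length-three chain from, say, $\mathfrak{sl}_2 \oplus \ab_{n-3}$), and the nilpotent subcase ``reduces essentially to dimensions at most five'' hides the real combinatorial work of bounding $\dim A^2$ and the nilpotency class. Most importantly, the closing sentence about ruling out ``sporadic solvable'' algebras by a ``dimension-and-invariant argument'' is exactly where the proof lives; as written it is a promissory note rather than an argument. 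If you intend to reconstruct the proof, that is where the effort must go; otherwise, the correct move in this paper is simply to cite \cite{leveltwo}.
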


It is still desirable to obtain a complete classification of level two algebras. One step is to ask about the existence of level two algebras in other varieties. Since $n_{5, 1}$ and $n_{5, 2}$ are Lie, we may well ask if there are any non-Lie \textit{Leibniz} algebras of level two.

\begin{definition}\cite{loday}
    A (right) Leibniz algebra is an non-associative algebra such that for all \( x,y,z \in L \), the
    following identity holds:
    \begin{displaymath}
        x (yz) = (xy) z - (xz) y.
    \end{displaymath}
\end{definition}

This is a natural generalization of Lie algebras, in that an antisymmetric Leibniz algebra is Lie.
Note that a left Leibniz algebra is defined by  identity $(x y) z = x(y z) - y (x z).$

Degenerations of Lie and Leibniz algebras were the subject of
numerous papers, see for instance \cite{nilpotentLeibnizFive, beltita, Burde, condeg, nilLie} and references given therein, and their research continues
actively. In particular, in \cite{solvLeib, Leibsuper} some irreducible components of Leibniz algebras are found.

 In this paper, we extend Theorem \ref{khudo} to identify all non-Lie Leibniz algebras of level two; we find that two of these are nilpotent and one is solvable. We then proceed to classify all $n$-dimensional nilpotent algebras of level two, and find that these are all Leibniz.

\section{Main Results}

Our first main result is the classification of Leibniz algebras of level two.

\begin{theorem}\label{mainProof}
    Let \(L\) be a \(n\)-dimensional non-Lie Leibniz algebra of level two. Then \( L \) is isomorphic one of the following three algebras:
    \begin{alignat*}{3}
        L_{4}(\alpha)\oplus a_{n-3} &: \qquad &&e_1 e_1 = e_3, \qquad &&e_2 e_1 = e_3, \qquad e_2 e_2 =
        \alpha e_{3}; \\
        L_{5}\oplus a_{n-3} &: \qquad &&e_1 e_1 = e_3, \qquad &&e_1 e_2 = e_3, \qquad e_2 e_1 = e_3; \\
        r_{n} &: \qquad &&e_i e_{1} = e_{i}, \qquad &&2 \leq i \leq n.
    \end{alignat*}
\end{theorem}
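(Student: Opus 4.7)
The proof splits into a realization direction (the three listed algebras are non-Lie Leibniz of level exactly two) and a converse (nothing else works). I would handle realization first. Verifying the Leibniz identity on the structure constants is a direct, finite computation in each of $L_4(\alpha)$, $L_5$, and $r_n$. Non-Lie is immediate: the nilpotent ones have $e_1 e_1 \neq 0$, and in $r_n$ we have $e_i e_1 + e_1 e_i = e_i \neq 0$. To establish that the level is at most two, I would exhibit explicit one-parameter subgroups $g_t \in GL_n(\mathbb{C}(t))$ realizing a direct degeneration from each candidate to an appropriate level-one algebra from Theorem \ref{omi-khudo} (the nilpotent $L_4(\alpha)\oplus a_{n-3}$ and $L_5\oplus a_{n-3}$ both degenerate to $\lambda_2\oplus \ab_{n-2}$, while $r_n$ degenerates to $p_n^-$ or $\nu_n(\alpha)$). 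To upgrade to equality, I would list all degenerations out of each of the three and check that every target is either abelian or level one.

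For the converse, the strategy is to bound combinatorial invariants using the level-two hypothesis. If $\text{lev}_n(L)=2$, then $L$ admits a direct degeneration only to algebras of level at most one, which forces $\dim L^2 \leq 2$, $\dim L^3 = 0$ (in the nilpotent case), and strong constraints on $\dim Z(L)$ and on the left/right annihilators. I would split into (i) $L$ nilpotent and (ii) $L$ non-nilpotent solvable. In case (i), $L^2 \subseteq Z(L)$ and the multiplication descends to a bilinear map $L/L^2 \otimes L/L^2 \to L^2$; after separating out a maximal abelian direct summand in the kernel of this map, what remains has dimension at most three, and I would invoke the classification of nilpotent Leibniz algebras of dimension $\leq 3$ to read off $L_4(\alpha)$ and $L_5$ as the only non-Lie survivors (the Lie nilpotent ones falling under Theorem \ref{khudo} with higher level). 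In case (ii), I would examine the action of a semisimple element on the nilradical and show that, after change of basis, the rank-one left-action family $e_i e_1 = e_i$ is the unique way to stay at level two without triggering a longer chain.

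The main obstacle is the \emph{non-degeneration} step. Exhibiting a degeneration is a one-line parametrized substitution, but ruling out a hypothetical chain of length three — equivalently, showing that none of $L_4(\alpha)\oplus a_{n-3}$, $L_5\oplus a_{n-3}$, $r_n$ degenerates to any of the others, and that no algebra with a richer multiplication table sits strictly above any of them in the closure order — requires invariants preserved by degeneration. The tools I would deploy are the standard closed/semicontinuous invariants: $\dim L^2$, $\dim L^k$ for iterated squares, $\dim Z(L)$, $\dim \text{Ann}_r(L)$, $\dim \text{Ann}_\ell(L)$, rank of the Gram matrix of the bilinear form on $L/\text{Ann}$, and the Burde-type $c$-invariants counting solutions of bracket equations. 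For the $\alpha$-parameter in $L_4(\alpha)$ I would expect an additional separation argument (the family is genuinely a moduli of pairwise non-isomorphic, non-degenerating algebras for generic $\alpha$, with the exceptional values $\alpha \in \{0,1\}$ handled separately).

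Finally, to complete the argument I would cross-check against the level-one list in Theorem \ref{omi-khudo}: each of the three candidates must \emph{not} itself be level one (confirmed by $\dim L^2 \geq 1$ together with the fact that they do not appear in that list), and every direct degeneration target must lie in that list or be abelian, so that the longest chain from $L$ to $\ab_n$ has length exactly two.
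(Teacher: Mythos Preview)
Your proposal has a genuine gap in the converse direction, and the approach there is not how the paper proceeds.

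The claim that $\text{lev}_n(L)=2$ forces $\dim L^2 \leq 2$ is simply false: $r_n$ itself has $\dim r_n^2 = n-1$. Even if you meant this bound only in the nilpotent case, you give no mechanism for deriving it from the level hypothesis. The level is defined as a maximal chain length in the degeneration order; it does not hand you structural bounds like $\dim L^2$ or $\dim L^3$ directly. To extract such bounds you would already need to know that any nilpotent Leibniz algebra with larger $L^2$ degenerates through a chain of length at least three, which is essentially the theorem you are trying to prove. Likewise, ``examine the action of a semisimple element on the nilradical'' is not a well-defined step in the Leibniz setting, where there is no Jordan decomposition or Levi theory to invoke.

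The paper's argument runs in the opposite logical direction. It does \emph{not} constrain $L$ and then classify; instead it proves (Proposition~\ref{prop1}) that \emph{every} non-Lie Leibniz algebra not already of level one degenerates to one of $L_4(\alpha)$, $L_5$, or $r_n$. The engine for this is the operator $\varphi_x(y)=xy+yx$ for an element with $xx\neq 0$: one analyses the matrix of $\varphi_x$ in a basis starting with $e_1=x$, $e_2=xx$, and repeatedly applies Lemma~\ref{nilpotentLeibniz} (a $3\times 3$ structure-constant criterion forcing a degeneration to $L_4(\alpha)$ or $L_5$) to whittle the matrix down. The residual cases either yield $r_n$ directly or split $L$ as $\lambda_2 \oplus M$, which is handled by Example~\ref{example1}. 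Once Proposition~\ref{prop1} is in place, a level-two $L$ that degenerates \emph{properly} to one of the three would sit atop a chain of length at least three, a contradiction; so $L$ must be isomorphic to one of them. The non-degeneration among the three is then dispatched with $\dim\rann$, $\dim\mathrm{Der}$, and a citation for $L_4(\alpha)\not\leftrightarrow L_5$. Your realization sketch and your list of separating invariants are fine, but without the $\varphi_x$/Lemma~\ref{nilpotentLeibniz} machinery (or an equivalent) you have no proof of the converse.
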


Together with the nilpotent Lie algebras of level two identified in \cite{leveltwo}, our other main result identifies these four algebras as the only nilpotent algebras of level two.

\begin{theorem}\label{nilpotentclass}
Any finite-dimensional nilpotent algebra of level two is isomorphic to one of the following algebras:
$$\begin{array}{rlll}
    n_{5, 1} \oplus a_{n-5}  : & e_1 e_3 = e_5, &  e_2e_1 = e_5;\\
    n_{5, 2} \oplus a_{n-5} : & e_1 e_2 = e_4 , & e_1 e_3 = e_5;\\
    L_4 (\alpha) \oplus a_{n-3}  :& e_1 e_1 = e_3, & e_2 e_2 = \alpha e_3, & e_1 e_2 = e_3;\\
    L_5 \oplus a_{n-3} : & e_1 e_1 = e_3 , & e_1 e_2 = e_3, & e_2 e_1 = e_3.
\end{array}
$$
\end{theorem}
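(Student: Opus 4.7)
The four algebras in the conclusion are already known to be nilpotent and of level two: $n_{5,1}\oplus a_{n-5}$ and $n_{5,2}\oplus a_{n-5}$ are the nilpotent entries of Theorem \ref{khudo}, while $L_{4}(\alpha)\oplus a_{n-3}$ and $L_{5}\oplus a_{n-3}$ are the nilpotent entries of Theorem \ref{mainProof}. The content of Theorem \ref{nilpotentclass} is therefore the converse: no other nilpotent algebra has level two. My plan is to reduce this converse to the Leibniz classification already established, via the following key structural claim.

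\textbf{Key reduction.} \emph{If $A$ is a nilpotent algebra of level two, then $A^{3}=0$, i.e.\ every product of three elements of $A$ vanishes.}

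Granting this reduction, the right Leibniz identity $x(yz)=(xy)z-(xz)y$ is satisfied trivially because each of the three terms lies in $A^{3}=0$, so $A$ is a Leibniz algebra. If $A$ is anticommutative, hence Lie, Theorem \ref{khudo} forces $A\iso n_{5,1}\oplus a_{n-5}$ or $A\iso n_{5,2}\oplus a_{n-5}$, these being the only nilpotent Lie algebras on that list. If $A$ is a non-Lie Leibniz algebra, Theorem \ref{mainProof} forces $A\iso L_{4}(\alpha)\oplus a_{n-3}$ or $A\iso L_{5}\oplus a_{n-3}$, since the remaining candidate $r_{n}$ is solvable but not nilpotent. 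This exhausts the four cases.

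To prove the key reduction, I would start from a chain $A\to B\to\ab_{n}$ of direct degenerations witnessing $\text{lev}_{n}(A)=2$. Since the nilpotent locus is closed in $\mathrm{Alg}_{n}(k)$ (nilindex being bounded by dimension), $B$ is itself a nilpotent algebra of level one; Theorem \ref{omi-khudo} then forces $B\iso n_{3}^{-}\oplus\ab_{n-3}$ or $B\iso\lambda_{2}\oplus\ab_{n-2}$, both of which satisfy $B^{3}=0$. To derive a contradiction from $A^{3}\neq 0$, the approach is to construct an intermediate algebra $C$ with $A\to C\to B$ both non-trivial, contradicting directness of $A\to B$. Natural candidates for $C$ are filtration-based constructions such as the associated graded $\operatorname{gr}(A)$ along $A\supseteq A^{2}\supseteq A^{3}\supseteq\cdots$ (realized as the $t\to 0$ limit of the one-parameter subgroup $g_{t}(e_{i})=t^{d_{i}}e_{i}$, with $d_{i}$ the filtration degree of $e_{i}$) or partial quotients of $A$ that collapse only part of the triple-product structure. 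A case analysis by the invariants $\dim A^{2}$, $\dim A^{3}$, and $\dim\operatorname{Ann}(A)$ handles the possibilities; after peeling off abelian summands (which do not affect the level), this reduces to an explicit classification of small-dimensional nilindex-four nilpotent algebras, for each of which a three-step chain of direct degenerations must be exhibited.

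\textbf{Main obstacle.} The crux is the key reduction, and within it the subcase where $A$ is already graded with $A^{3}\neq 0$: here the filtration-based construction yields no new algebra, and the intermediate degeneration must be produced by hand via an ad hoc one-parameter subgroup of $\mathrm{GL}_{n}(k)$. Both directness and non-triviality must then be verified through the monotonicity of dimensional invariants, namely $\dim B^{k}\leq\dim A^{k}$ and $\dim\operatorname{Ann}(A)\leq\dim\operatorname{Ann}(B)$ whenever $A\to B$. Ensuring that no level-two nilpotent algebra of nilindex four has been overlooked, particularly among filiform-like algebras, is where I expect the bulk of the technical effort to lie.
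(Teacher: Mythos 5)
Your endgame is sound and is a genuinely different route from the paper's: granted the claim that a nilpotent algebra of level two satisfies $A^{3}=0$, the Leibniz identity does hold vacuously, and Theorems \ref{khudo} and \ref{mainProof} (discarding the non-nilpotent entries $r_{n}$, $g_{n,1}(\alpha)$, $g_{n,2}$, $r_2$) do yield exactly the four algebras. The paper does not argue this way: it proves directly, by an exhaustive case analysis on how the products of generators fall into $A^{2}$, driven by repeated applications of Lemma \ref{nilpotentLeibniz} and the explicit degenerations of Example \ref{example1}, that \emph{every} non-abelian nilpotent algebra outside the list degenerates properly to one of $L_{4}(\alpha)$, $L_{5}$, $n_{5,1}$, $n_{5,2}$, which forces its level to be at least three.

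The gap is that your key reduction is asserted rather than proved, and the mechanism you sketch for it does not work. The associated graded construction gives a degeneration $A\to\operatorname{gr}(A)$, but there is no reason $\operatorname{gr}(A)$ should degenerate onward to the particular level-one algebra $B$ in your chain, so no interpolation $A\to C\to B$ is obtained (and, as you concede, the construction is vacuous precisely in the essential case where $A$ is already graded with $A^{3}\neq0$); note also that breaking directness of $A\to B$ is not by itself a contradiction, since the level is a maximum over all chains --- you must actually exhibit a chain of three non-trivial degenerations. Your fallback, ``an explicit classification of small-dimensional nilindex-four nilpotent algebras,'' is not available and the reduction to bounded dimension is unjustified: peeling off abelian direct summands does not bound the dimension (filiform-type nilpotent algebras have $A^{3}\neq0$ and no abelian summand in every dimension $n$), and general non-associative nilpotent algebras of nilindex four are not classified. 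What is actually required when $A^{3}\neq0$ is an explicit non-trivial degeneration of $A$ onto an algebra already known to have level two; this is precisely the content of, e.g., Case 1.2.5 and the final subcases of Case 2 in the paper's proof (where a nonzero product $e_{i}e_{k+1}$ or $e_{k+1}e_{i}$ is fed into Lemma \ref{nilpotentLeibniz} on the triple $\{e_{i},e_{k+1},e_{k+2}\}$, or a degeneration onto $n_{5,2}$ is written down), and none of that work appears in your proposal. Until those degenerations are exhibited, the statement ``nilpotent of level two implies $A^{3}=0$'' is exactly as hard as the part of the theorem it is meant to replace.
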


It is now natural to ask if any algebra of level two is a direct sum of two level one algebras. The
following examples gives us a negative answer to this question.

\begin{exam}\label{example1} The algebras
\[\begin{array}{rlll} n^{-}_{3}\oplus \lambda_{2}=\{x_1, x_2, x_3, x_4, x_5
\}:& x_1 x_1 = x_2,& x_3x_4= x_5, & x_4x_3 = - x_5;\\
\lambda_{2}\oplus \lambda_{2}=\{x_1, x_2, x_3, x_4\}: & x_1x_1 =
x_2, &  x_3x_3= x_4; \\ \lambda_{2}\oplus p_{n}^{-}=\{x_1, x_2, x_3,
x_4, \dots, x_n\}: & x_1x_1 = x_2, & x_ix_3= x_i, &  x_3x_i = -x_i,
\ \ 4 \leq i \leq n.
\end{array}\]
via the family of matrices
\[\left\{\begin{array}{ll}g_t^{-1}(x_1)=t(x_1+x_3), \\ g_t^{-1}
(x_2)=\frac t 2 (x_1+x_4),\\ g_t^{-1}(x_3)=t^{2}x_2,\\ g_t^{-1}(x_4) =
t^{2}x_1,\\ g_t^{-1}(x_5)=t(x_5 + x_2), \end{array}\right. \
\left\{\begin{array}{ll} g_t^{-1}(x_1)=tx_1, \\
g_t^{-1} (x_2)= t (x_1+x_3),\\ g_t^{-1}(x_3)=t^{2}x_2,\\ g_t^{-1}(x_4)
= t(x_2+x_4),\end{array}\right. \  \left\{\begin{array}{ll}g_t^{-1}(x_1)= t(x_1+x_4),\\ g_t^{-1}(x_2)=t(\frac 1
2 x_1+x_3),\\ g_t^{-1}(x_3)=t^2x_2,\\ g_t^{-1}(x_4) = t(x_4+\frac 1
2x_2), \\ g_t^{-1}(x_i) = tx_i, & 5 \leq i \leq n,\end{array}\right.\]
degenerate to the algebras $L_4(\frac{1}{4})\oplus a_2,$ $L_5\oplus
a_1$ and $L_4(\frac{1}{4})\oplus a_{n-3}$ respectively.
\end{exam}

Since the algebras \( L_{4}(\alpha) \) and $L_5$ are not algebras of level one, we deduce that the level of
the algebras $n^{-}_{3}\oplus \lambda_{2},$ $\lambda_{2}\oplus \lambda_{2}$ and $\lambda_{2}\oplus
p_{n}^{-}$ must be greater than two.

Now let $L$ be a $n$-dimensional complex algebra and $\{e_1, e_2, \dots, e_n\}$ be a basis of $L.$ The
multiplication on the algebra $L$ is defined by the products of the basis elements; namely, by the products
\[e_i e_j=\sum\limits_{k=1}^n\gamma_{i,j}^ke_k,\] where \(\gamma_{i,j}^k \) are the structural constants.

We first prove a very useful lemma, which will allow us to immediately conclude a degeneration to either \(
L_{4}(\alpha) \) or \( L_{5} \) based on a multiplication table of a certain form.

\begin{lemma}\label{nilpotentLeibniz}
    Suppose \( L \) is an \(n\)-dimensonal algebra and let $\{e_1, e_2, \dots, e_n\}$ be a basis of $L.$ If
    there exist distinct \( i, j,k \) such that
    \begin{displaymath}
        (\gamma_{i,i}^k, \gamma_{i,j}^k, \gamma_{j,i}^k, \gamma_{j,j}^k)
        \notin \{(0, \beta, -\beta, 0), (\delta, \beta, \beta, \frac {\beta^2} {\delta})\} \hspace{0.2cm}
        \text{where} \hspace{0.2cm} \delta \neq 0,
    \end{displaymath}
    then \( L \to L_{4}(\alpha)\) or \( L \to L_{5} \).
\end{lemma}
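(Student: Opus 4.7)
The plan is to construct a one-parameter family $g_t \in GL_n(\mathbf{C}(t))$ realizing the desired degeneration. The guiding idea is to scale $e_i, e_j$ by $t$ and $e_k$ by $t^2$, so that in the limit $t \to 0$ the only surviving contributions to products come from pairs $(e_a, e_b)$ with $a, b \in \{i, j\}$, producing coefficients $\gamma^k_{ab}$ along $e_k$. Explicitly I would take $g_t^{-1}(e_i) = t e_i$, $g_t^{-1}(e_j) = t e_j$, $g_t^{-1}(e_k) = t^2 e_k$, and check directly that
\begin{displaymath}
    \lim_{t \to 0} (g_t \cdot \lambda)(e_a, e_b) = \gamma^k_{ab}\, e_k \qquad \text{for all } a, b \in \{i, j\}.
\end{displaymath}

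The remaining basis vectors $e_\ell$ (for $\ell \notin \{i, j, k\}$) must be handled so that all products involving them vanish in the limit. I would set $g_t^{-1}(e_\ell) = t e_\ell$ together with $t$-dependent corrections lying in $\langle e_i, e_j, e_k \rangle$, the corrections being chosen to cancel any surviving $e_k$-component of $e_a e_\ell$ and $e_\ell e_a$ for $a \in \{i, j\}$. The hypothesis that the tuple $(\gamma^k_{ii}, \gamma^k_{ij}, \gamma^k_{ji}, \gamma^k_{jj})$ lies outside the excluded set provides exactly enough nondegeneracy in the matrix $M := (\gamma^k_{ab})_{a,b \in \{i, j\}}$ for these corrections to exist as rational functions of the entries of $M$. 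Under this choice of $g_t$, the limit algebra $\mu$ has nonzero products only on $\langle e_i, e_j, e_k \rangle$, described entirely by $M$, with everything else abelian.

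The final step is to identify the resulting 3-dimensional subalgebra up to isomorphism. Changes of basis within $\{e_i, e_j\}$ together with rescaling of $e_k$ act on $M$ by $M \mapsto s P^T M P$, and a short classification of $2 \times 2$ matrices under this action shows that every $M$ outside the excluded set is equivalent to the multiplication matrix of either $L_5$ (the symmetric rank-two case) or $L_4(\alpha)$ for some $\alpha$ (the case with nontrivial antisymmetric part coupled to a nonzero symmetric part). The excluded tuples $(0, \beta, -\beta, 0)$ and $(\delta, \beta, \beta, \beta^2/\delta)$ correspond exactly to the purely antisymmetric and symmetric rank-one cases, which produce the level-one algebras $n_3^- \oplus \ab_{n-3}$ and $\lambda_2 \oplus \ab_{n-2}$, and these are precisely the cases where the correction step above breaks down. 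The main technical difficulty is therefore verifying that the $t$-dependent corrections defining $g_t^{-1}(e_\ell)$ can be chosen consistently for every $\ell$ and every product, a check which is the heart of the argument and which fails only in the excluded cases.
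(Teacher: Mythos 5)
There is a genuine gap, and it sits exactly where you located ``the heart of the argument.'' Your single scaling $g_t^{-1}(e_i)=te_i$, $g_t^{-1}(e_j)=te_j$, $g_t^{-1}(e_k)=t^2e_k$, $g_t^{-1}(e_\ell)=te_\ell$ leaves the $e_k$-components of $e_ae_\ell$ and $e_\ell e_m$ (for $a\in\{i,j\}$, $\ell,m\notin\{i,j,k\}$) surviving at order $t^{2}\cdot t^{-2}=1$, and the corrections you propose cannot in general remove them: perturbing $g_t^{-1}(e_\ell)$ by $c_ie_i+c_je_j+c_ke_k$ gives you three free coefficients to satisfy at least four independent linear conditions (killing the $e_k$-component of $e_ie_\ell$, $e_je_\ell$, $e_\ell e_i$, $e_\ell e_j$), with coefficients $\gamma^k_{ai}$, $\gamma^k_{aj}$, $\gamma^k_{ak}$ that are unrelated to the matrix $M=(\gamma^k_{ab})_{a,b\in\{i,j\}}$. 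So your claim that the hypothesis on $M$ ``provides exactly enough nondegeneracy for these corrections to exist'' is not correct: the excluded tuples play no role whatsoever in constructing the degeneration; they are used only afterwards, to rule out the limit algebra being Lie (the $(0,\beta,-\beta,0)$ case, giving $n_3^-$) or isomorphic to $\lambda_2$ (the symmetric rank-one case), so that the classification of $3$-dimensional nilpotent Leibniz algebras forces the limit to be $L_4(\alpha)$ or $L_5$. Conflating these two issues is the structural error in the proposal.

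The repair is a different weighting rather than corrections. The paper does it in two stages: first $g_t(e_i)=t^{-1}e_i$, $g_t(e_j)=t^{-1}e_j$, $g_t(e_s)=t^{-2}e_s$ for all other $s$, which kills every product except $e_ae_b=\sum_{s}\gamma^s_{ab}e_s$ ($a,b\in\{i,j\}$, sum over the $t^{-2}$-weighted indices); then a second degeneration separating $e_k$ (weight $t^{-2}$) from the remaining $e_s$ (weight $t^{-1}$) to retain only the $e_k$-component. Transitivity of degeneration then yields the limit supported on $\langle e_i,e_j,e_k\rangle$ with multiplication matrix $M$, no corrections needed. (Equivalently, a single diagonal degeneration with weights strictly between $1$ and $2$ on the indices $\ell\notin\{i,j,k\}$ works after reparametrizing $t$.) Your final identification step --- classifying $M$ up to $M\mapsto sP^{T}MP$ and observing that the excluded set is exactly the antisymmetric and symmetric rank-one orbits --- is sound and agrees with the paper's appeal to the classification of three-dimensional nilpotent Leibniz algebras, but as written the proof does not get to that step.
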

\begin{proof}
    Without loss of generality, we may assume \( i = 1 \), \( j = 2 \), and \( k = 3 \). We see that if we take the degeneration
    \begin{displaymath}
        g_{t}(e_{1}) = t^{-1} e_1, \quad g_{t}(e_2) = t^{-1} e_2, \quad g_{t}(e_{i}) = t^{-2} e_{i} \quad 3 \leq i\leq n,
    \end{displaymath}
    then we have following nontrivial products
    \begin{align*}
        &e_{1} e_1 = \gamma_{1,1}^3 e_3 + \sum_{s=4}^n\gamma_{1,1}^s e_s, \qquad  e_{1} e_2 =
        \gamma_{1,2}^3 e_3 + \sum\limits_{s=4}^n\gamma_{1,2}^s e_s,\\
        &e_{2} e_1 = \gamma_{2,1}^3 e_3 + \sum\limits_{s=4}^n\gamma_{2,1}^s e_s, \qquad e_{2} e_2 =
        \gamma_{2,2}^3 e_3 + \sum\limits_{s=4}^n\gamma_{2,2}^s e_s.
    \end{align*}
        Furthermore, if we take the additional degeneration
    \begin{displaymath}
        g_{t}(e_{1}) = t^{-1} e_1, \quad g_{t}(e_2) = t^{-1} e_2, \quad g_{t}(e_3) = t^{-2} e_3, \quad g_{t}(e_{i}) = t^{-1} e_{i} \quad 4 \leq i\leq n,
    \end{displaymath}
    then we have an algebra with the following multiplication
    \begin{displaymath}
    e_{1} e_{1} = \gamma_{1,1}^3 e_3, \quad e_{1} e_{2} = \gamma_{1,2}^3 e_3, \quad e_{2} e_{1} =
        \gamma_{2,1}^3 e_3, \quad e_{2} e_{2} = \gamma_{2,2}^3 e_3.
    \end{displaymath}

    We see that this algebra is nilpotent and also non-Lie as \((\gamma_{1,1}^3, \gamma_{1,2}^3,
    \gamma_{2,1}^3, \gamma_{2,2}^3) \neq  (0, \beta, -\beta, 0)\). Moreover, since \((\gamma_{1,1}^3,
    \gamma_{1,2}^3, \gamma_{2,1}^3, \gamma_{2,2}^3) \neq  (\delta, \beta, \beta, \frac {\beta^2}
    {\delta})\), we conclude that $L$ is not isomorphic to the algebra $\lambda_2.$ Due to the
    classification of three dimensional nilpotent Leibniz algebras \cite{3-dim}, we conclude that this
    algebra is isomorphic to  either \( L_{4}(\alpha) \) or \( L_{5} \).
\end{proof}

\subsection{Classification of algebras of level two in the variety of Leibniz algebras}

Let $L$ be a Leibniz algebra and let \( x \in L \). We define \( \varphi_{x} : L \to L \) to be the linear
operator where \( \varphi_{x}(y) = yx + xy \). We see that by applying the Leibniz identity we get the
following two equations:
\begin{align*}
    &z \varphi_{x}(y) = z (yx + xy) = z (yx) + z (xy) = (zy)x - (zx)y + (zx)y - (zy)x = 0,\\
    &z (xx) = (zx) x - (zx) x = 0.
\end{align*}
This proves that both \( \varphi_{x}(y) \) and \( xx \) are in the right annihilator for any \( x \in L \).

In this section we will examine the matrix representation of \( \varphi_{x} \) on a case-by-case basis in
order to prove Theorem \ref{mainProof}.

\begin{proposition}\label{prop1}
    Let \(L\) be a \(n\)-dimensional non-Lie Leibniz algebra which is not of level one (i.e. \( L
    \not\iso \lambda_{2} \)). Then \( L \) degenerates to one of the following three algebras:
    \begin{alignat*}{3}
        L_{4}(\alpha)\oplus a_{n-3} &: \qquad &&e_1 e_1 = e_3, \qquad &&e_2 e_1 = e_3, \qquad e_2 e_2 =
        \alpha e_{3}; \\
        L_{5}\oplus a_{n-3} &: \qquad &&e_1 e_1 = e_3, \qquad &&e_1 e_2 = e_3, \qquad e_2 e_1 = e_3; \\
        r_{n} &: \qquad &&e_i e_{1} = e_{i}, \qquad &&2 \leq i \leq n.
    \end{alignat*}
\end{proposition}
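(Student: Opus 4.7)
The plan is to case-split on the spectral behavior of the operators $\varphi_x : y \mapsto yx + xy$, using the observation recorded just before the statement that $\operatorname{Im} \varphi_x \subseteq \operatorname{Ann}_R L$ for every $x \in L$, together with the fact that a non-Lie Leibniz algebra in characteristic zero must admit some $x$ with $\varphi_x \not\equiv 0$. The natural dichotomy is whether some $\varphi_x$ has a nonzero eigenvalue; this will separate the solvable target $r_n$ from the two nilpotent targets $L_4(\alpha)$ and $L_5$.

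In the first case, suppose $yx + xy = \lambda y$ for some $x, y$ and some $\lambda \neq 0$. Then $y = \lambda^{-1} \varphi_x(y) \in \operatorname{Ann}_R L$, and after rescaling we may take $\lambda = 1$. I would set $e_1 = x$, extend $y$ to a basis of the $\varphi_x$-invariant subspace $\operatorname{Ann}_R L$ using its generalized eigenvectors, and then complete to a full basis of $L$. A one-parameter family $g_t$ that weights $e_1$ differently from the remaining basis vectors should then suppress every structure constant except those giving $e_i e_1 = e_i$ for $i \geq 2$, producing the degeneration to $r_n$.

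In the second case, $\varphi_x$ is nilpotent for every $x$, but some $\varphi_x(y) \neq 0$. Setting $w := \varphi_x(y)$ and choosing a basis with $e_1 = x$, $e_2 = y$, $e_3 = w$, the identity $\gamma^3_{1,2} + \gamma^3_{2,1} = 1$ immediately rules out the antisymmetric forbidden locus $(0, \beta, -\beta, 0)$ of Lemma \ref{nilpotentLeibniz}. If the 4-tuple $(\gamma^3_{1,1}, \gamma^3_{1,2}, \gamma^3_{2,1}, \gamma^3_{2,2})$ also avoids the quadric locus $(\delta, \beta, \beta, \beta^2/\delta)$, the lemma delivers $L \to L_4(\alpha)$ or $L \to L_5$; otherwise I would perturb the chosen basis within the $(x,y)$-plane, for instance $y \mapsto y + c x$ for generic $c$, to move off the quadric. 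The hypothesis $L \not\simeq \lambda_2$ is precisely what guarantees that such a perturbation really does change the relevant structure constants.

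The main obstacle will be this last sub-step. Case 1 is essentially computational once the eigenvector has been exhibited, but in Case 2 the quadric $\gamma^3_{1,1} \gamma^3_{2,2} = (\gamma^3_{1,2})^2$ is a genuine algebraic condition that an unlucky choice of $(x, y)$ can satisfy, and a clean argument — either a generic-perturbation computation or an alternative basis adapted to the Jordan structure of $\varphi_x$ on $\operatorname{Ann}_R L$ — must be found that always lands outside both forbidden loci while still producing a triple $(i, j, k)$ meeting the hypotheses of Lemma \ref{nilpotentLeibniz}.
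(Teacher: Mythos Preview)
Your spectral dichotomy is attractive, but neither branch goes through as stated.

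In Case~1, the claim that a single nonzero eigenvector of some $\varphi_x$ forces $L \to r_n$ is false. Consider for instance the $3$-dimensional algebra with $e_2 e_1 = e_2$, $e_3 e_1 = 2e_3$: it is non-Lie Leibniz, $\varphi_{e_1}$ has eigenvalue $1$, yet a diagonal family cannot collapse the two distinct eigenvalues to produce $e_i e_1 = e_i$ for all $i\ge 2$ (indeed $\operatorname{rank} R_x \le 1$ in $r_2\oplus a_{n-2}$ obstructs any degeneration to $r_n$). What the paper actually shows is that $r_n$ arises only as an \emph{isomorphism type}, reached after a long reduction forces $\varphi_x$ to act as a single nonzero scalar on $\operatorname{span}\{e_2,\dots,e_n\}$; every other configuration with a nonzero eigenvalue is fed back into Lemma~\ref{nilpotentLeibniz} and lands in $L_4(\alpha)$ or $L_5$. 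So Case~1 does not separate $r_n$ from the nilpotent targets in the way you suggest.

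In Case~2, the perturbation $y\mapsto y+cx$ cannot leave the quadric locus in the residual situation $L=\lambda_2\oplus M$ with $M$ Lie. Writing $\lambda_2=\operatorname{span}\{f_1,f_2\}$ with $f_1 f_1=f_2$, any $x,y$ with $\varphi_x(y)\neq 0$ have nonzero $f_1$-components $a,b$, and one computes $\gamma^3_{1,1}=\tfrac{a}{2b}$, $\gamma^3_{1,2}=\gamma^3_{2,1}=\tfrac12$, $\gamma^3_{2,2}=\tfrac{b}{2a}$ when $e_3=\varphi_x(y)=2ab\,f_2$; this satisfies $\gamma^3_{1,1}\gamma^3_{2,2}=(\gamma^3_{1,2})^2$ for every $a,b$, so replacing $y$ by $y+cx$ (which merely shifts $b$) never helps. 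The hypothesis $L\not\simeq\lambda_2$ only guarantees $M$ is nontrivial, not non-Lie. The paper closes this case not by Lemma~\ref{nilpotentLeibniz} at all, but by recognising the direct-sum decomposition and invoking the explicit factor-mixing degenerations of Example~\ref{example1} (e.g.\ $\lambda_2\oplus n_3^{-}\to L_4(\tfrac14)$), which are invisible from the $4$-tuple viewpoint. If you want to rescue your approach here, the fix is not to perturb $y$ but to choose the \emph{remaining} basis vectors so that some $e_k$ mixes $f_2$ with a nonzero commutator from $M$; then the $4$-tuple for that $k$ leaves both forbidden loci.
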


\begin{proof}
    Since \(L\) is a non-Lie Leibniz algebra, we know that there exists an element \( x \in L \) such that
    \( xx \neq 0 \). Suppose that \( xx = \alpha x \) for some constant \( \alpha \). Then, by the Leibniz
    identity, we have that
    \begin{displaymath}
        \alpha^2 x = \alpha xx = x (\alpha x) = x (xx) = (xx) x - (xx) x = 0
    \end{displaymath}
    which means that \( \alpha = 0 \). This is a contradiction, though, as \( xx \neq 0 \). Thus, it must be
    that \( xx \) is linearly independent from \(x\). Using this, we can form a basis \( \{ e_1 , e_2 ,
    \dots , e_{n} \} \), where \( e_1 = x \) and \( e_2 = xx \).

    We define the linear operator \( \varphi = \varphi_{x} \) and let \( (\alpha_{i,j}) \) be its matrix
    form. Thus, we have that
    \begin{displaymath}
        \varphi =
        \begin{bmatrix}
            0 & \alpha_{1, 2} & \alpha_{1, 3} & \alpha_{1, 4} & \dots & \alpha_{1, n} \\
            2 & \alpha_{2, 2} & \alpha_{2, 3} & \alpha_{2, 4} & \dots & \alpha_{2, n} \\
            0 & \alpha_{3, 2} & \alpha_{3, 3} & \alpha_{3, 4} & \dots & \alpha_{3, n} \\
            0 & \alpha_{4, 2} & \alpha_{4, 3} & \alpha_{4, 4} & \dots & \alpha_{4, n} \\
            \vdots & \vdots & \vdots & \vdots & \ddots & \vdots \\
            0 & \alpha_{n, 2} & \alpha_{n, 3} & \alpha_{n, 4} & \dots & \alpha_{n, n}
        \end{bmatrix}
    \end{displaymath}

    Suppose that \( \alpha_{j,k} \neq 0 \) for some \( 1,j,k \) distinct and \( k \geq 3 \). This means that
    we have the following products
    \begin{alignat*}{2}
        &e_1 e_1 = e_2, \qquad && e_{j} e_{j} = \gamma_{j,j}^{k} e_{k} + \sum_{s = 1, s \neq k}^{n}
        \gamma_{j,j} ^{s} e_{s},\\
        &e_1 e_j = \gamma_{1,j}^k e_{k} + \sum_{s=1, s \neq k}^n \gamma_{1,j}^s e_{s}, \qquad &&e_{j} e_1 =
        (\alpha_{j,k} - \gamma_{1,j}^k) e_{k} + \sum_{s=1, s \neq k}^n \gamma_{j,1}^s e_{s}.
    \end{alignat*}

    Since \( \gamma_{1,1}^{k} = 0 \) and \( \gamma_{1,j}^{k} \neq - (\alpha_{j,k} - \gamma_{1,j}^{k}) \), we
    can apply Lemma \ref{nilpotentLeibniz} on the indices $1, j, k$ to see that \( L \to L_{4}(\alpha) \) or \(
    L_{5}\). Now we focus the case where \( \alpha_{j,k} = 0 \) for \( 1,j,k \) distinct and \( k \geq 3 \)
    . This gives us the following matrix representation of \(\varphi\):
    \begin{displaymath}
        \varphi =
        \begin{bmatrix}
            0 & \alpha_{1, 2} & \alpha_{1, 3} & \alpha_{1, 4} & \dots & \alpha_{1, n} \\
            2 & \alpha_{2, 2} & \alpha_{2, 3} & \alpha_{2, 4} & \dots & \alpha_{2, n} \\
            0 & 0 & \alpha_{3, 3} & 0 & \dots & 0 \\
            0 & 0 & 0 & \alpha_{4, 4} & \dots & 0 \\
            \vdots & \vdots & \vdots & \vdots & \ddots & \vdots \\
            0 & 0 & 0 & 0 & \dots & \alpha_{n, n}
        \end{bmatrix}
    \end{displaymath}

    Suppose that \( \alpha_{2,i} \neq 0 \) for some \( i \neq 1,2 \). Without loss of generality, let \(
    i = 3 \). Then if we take the change of basis \( e_{3} ' = e_1 - \frac{2}{\alpha_{2,3}} e_3 \), we have
    that
    \begin{align*}
        \varphi(e_3 ') &= \varphi(e_1 - \frac{2}{\alpha_{2,3}} e_3) = \varphi(e_1) - \frac{2}{\alpha_{2,3}
        } \varphi(e_3) = 2 e_2 - \frac{2}{\alpha_{2,3}} (\alpha_{1,3} e_1 + \alpha_{2,3} e_2 +
        \alpha_{3,3} e_3)\\
        &= - \frac{2 \alpha_{1,3}}{\alpha_{2,3}} e_1 - \frac{2 \alpha_{3,3}}{\alpha_{2,3}} e_3 =
        - \frac{2 \alpha_{1,3}}{\alpha_{2,3}} e_1 - \alpha_{3,3} (e_1 - e_3 ') =
        - \left( \frac{2 \alpha_{1,3}}{\alpha_{2,3}} - \alpha_{3,3} \right) e_1 + \alpha_{3,3} e_3'.
    \end{align*}
    Thus, we may assume that \( \alpha_{2,3} = 0 \). Since \( i= 3 \) was arbitrary, we can therefore assume
    that \( \alpha_{2,i} = 0 \) for \( 3 \leq i \leq n \). This means that the matrix representation of
    \(\varphi\) has the form:
    \begin{displaymath}
        \varphi =
        \begin{bmatrix}
            0 & \alpha_{1 2} & \alpha_{1 3} & \alpha_{1 4} & \dots & \alpha_{1 n} \\
            2 & \alpha_{2 2} & 0 & 0 & \dots & 0 \\
            0 & 0 & \alpha_{3 3} & 0 & \dots & 0 \\
            0 & 0 & 0 & \alpha_{4 4} & \dots & 0 \\
            \vdots & \vdots & \vdots & \vdots & \ddots & \vdots \\
            0 & 0 & 0 & 0 & \dots & \alpha_{n n}
        \end{bmatrix}
    \end{displaymath}

    Now suppose there exists \( j \geq 3 \) such that \( \alpha_{2,2} \neq \alpha_{j,j} \).
     Then we see that if we take the basis change \( e_{j}' = e_{2} + e_{j}
    \), we have that
    \begin{align*}
        \varphi(e_{j}') = \varphi(e_{2}) + \varphi(e_{j}) &= (\alpha_{1,2} + \alpha_{1,j} ) e_1 + \alpha_{2,2} e_{2} + \alpha_{j,j} e_{j} \\
        &= (\alpha_{1,2} + \alpha_{1,j} ) e_1 +
         (\alpha_{2,2} - \alpha_{j,j}) e_2 + \alpha_{j,j} e_{j}'
    \end{align*}
       By reapplying our previous argument, we see that it must be that either \( L \to L_{4}(\alpha), L_{5} \)
    or \( \alpha_{2,2} - \alpha_{j,j} = 0 \). Thus, we have that \( \alpha = \alpha_{2,2} = \alpha_{i,i} \)
    for \( 3 \leq i \leq n \).

    Now suppose that \( \alpha \neq 0 \). We can then take the basis change \( e_{i}' = \varphi(e_{i}) \)
    for \( 2 \leq i \leq n\). This means that \( e_2 , e_3, \dots , e_{n} \in \rann(L) \) and thus that \(
    \varphi(e_{i}) = e_{i} e_{1} \) for \( 2 \leq i \leq n \). Moreover, if \( \alpha_{1,i} \neq 0 \) for
    some \( 2 \leq i \leq n \), then we'd have that
    \begin{displaymath}
        \alpha_{1i} e_1 = \varphi(e_{i}) - \alpha e_i \in \rann(L)
    \end{displaymath}
    which is a contradiction to the fact that \( e_1 e_1 = e_2 \). Therefore, we have an algebra \( L
    \) with the following multiplication table:
    \begin{displaymath}
        e_1 e_1 = e_2, \qquad e_{i} e_1 = \alpha e_{i}, \ (\alpha \neq 0), \qquad 2 \leq i \leq n.
    \end{displaymath}
    If we then take the basis transformation \( e_1 ' = \frac{1}{\alpha} (e_1 - \frac{1}{\alpha} e_2) \), we
    see that this is exactly the algebra \( r_{n} \).

    Suppose then that \( \alpha = 0 \). Again, we see that if \( \alpha_{1,i} \neq 0 \) for some \( 2 \leq i
    \leq n \), then we'd have that \( \alpha_{1,i} e_1 = \varphi(e_{i}) \in \rann(L) \), which is a
    contradiction. This means that \(\varphi\) has the following matrix representation:
    \begin{displaymath}
        \varphi =
        \begin{bmatrix}
            0 & 0 & 0 & 0 & \dots & 0 \\
            2 & 0 & 0 & 0 & \dots & 0 \\
            0 & 0 & 0 & 0 & \dots & 0 \\
            0 & 0 & 0 & 0 & \dots & 0 \\
            \vdots & \vdots & \vdots & \vdots & \ddots & \vdots \\
            0 & 0 & 0 & 0 & \dots & 0
        \end{bmatrix}
    \end{displaymath}

    We now consider following products
    \begin{displaymath}
        e_{2} e_{2} = 0, \quad e_{3} e_{2} = 0,\quad e_{2} e_{3} = \gamma_{2,3}^2 e_{2}+ \gamma_{2,3}^3
        e_{3}+ \sum_{s=4}^n \gamma_{2,3}^s e_{s}, \quad e_{3} e_{3} = \gamma_{3,3}^2 e_{2}+ \gamma_{3,3}^3
        e_{3}+ \sum_{s=4}^n \gamma_{3,3}^s e_{s}.
    \end{displaymath}

    If there exist $\gamma_{2,3}^s \neq 0,$ for some $4 \leq s \leq n,$ then by Lemma
    \ref{nilpotentLeibniz}, we have that \( L \to L_{4}(\alpha), L_{5} \). Thus, we assume that
    $\gamma_{2,3}^s = 0$ for $4 \leq s \leq n.$ Therefore, \( e_{2} e_3 = \gamma_{2,3}^2 e_{2}+
    \gamma_{2,3}^3 e_{3} \).  If \( \gamma_{2,3}^3 \neq 0 \), then we have that \( \gamma_{2,3}^3 e_3 = e_2
    e_3 - \gamma_{2,3}^2 e_{2} \in \rann(L) \). This means that \( e_2 e_3 = 0 \), a contradiction as
    \(\gamma_{2,3}^3 \neq 0 \). Hence, \( e_2 e_3 = \gamma_{2,3}^2 e_{2} \).

    Assume that \( \gamma_{2,3}^{2} \neq 0 \). If we take the change of basis \( e_2 ' = e_2 - Ae_1 \), then
    we have that
    \begin{align*}
        &e_2 ' e_2 ' = (e_2 - Ae_1)(e_2 - Ae_1) = A^2 e_1 e_1 = A^2e_2 = A^3 e_1 + A^2 e_2',\\
        &e_2' e_3 = (e_2 - A e_1) e_3 = \gamma_{2,3}^{2} e_2 - A e_1 e_3= A (\gamma_{2,3}^2- \gamma_{1,3}^1
        - A\gamma_{1,3}^2) e_1 + (\gamma_{2,3}^2- A\gamma_{1,3}^2)e_2' -A\sum_{s = 3}^{n} \gamma_{1,3}^{s}
        e_{s},\\
        &e_3 e_2 ' = e_3 (e_2 - A e_1) =  Ae_1 e_3 =   A (\gamma_{1,3}^1 + A\gamma_{1,3}^2) e_1 +
        A\gamma_{1,3}^2e_2' + A\sum_{s = 3}^{n} \gamma_{1,3}^{s} e_{s},\\
        &e_3 e_3 = \gamma_{3,3}^{1} e_1 + \gamma_{3,3}^{2} e_1 + \sum_{s = 3}^{n} \gamma_{3,3}^{s} e_{s}.
    \end{align*}

    Since \( \gamma_{2,3}^{2} \neq 0 \), we see that we can always choose \( A \) so that $\gamma_{2,3}^2-
    \gamma_{1,3}^1 - A\gamma_{1,3}^2 \neq \gamma_{1,3}^1 + A\gamma_{1,3}^2.$ Hence, using Lemma
    \ref{nilpotentLeibniz} we obtain that \( L \to L_4(\alpha) , L_{5} \). Thus, it must be that \(
    \gamma_{2,3}^{2} = 0 \), which means that \( e_2 e _3 = 0 \). Since \( e_3 \) was arbitrary, we may
    assume that \( e_2 e_{i} = 0 \) for \( i \neq 1 ,2 \) and thus we have that \( e_2 \in \text{Ann}(L)
    \).

    Therefore, we have the following multiplcation table
    \begin{align*}
        &e_1e_1 = e_2, \quad e_2 e_{i} = e_{i} e_2 = 0 ,\quad 1 \leq i \leq n,\\
        &e_{1} e_i = - e_i e_1 = \sum_{s = 1}^{n} \gamma_{1,i}^{s} e_{s},\quad  3 \leq i \leq n,\\
        &e_i e_j = \sum_{s = 1}^{n} \gamma_{i,j}^{s} e_{s}, \quad 3 \leq i, j \leq n.
    \end{align*}

     Using the Leibniz identity
     \begin{displaymath}
         0 = (e_1 e_1) e_{i} - (e_{1} e_{i}) e_1 + e_1 (e_{i} e_1) = e_2 e_{i} - \left( \sum_{s = 1}^{n}
         \gamma _{1,i}^{s} e_{s}\right) e_1 - e_1 \left( \sum_{s = 1}^{n} \gamma_{1,i}^{s} e_{s} \right) = -
         2 \gamma_{1,i}^{1} e_2
     \end{displaymath}
     we obtain that \( \gamma_{1,i}^{1} =0\) for \( 3 \leq i \leq n\). Furthermore, if \(\gamma_{1,i}^{2}
     \neq 0\) or \(\gamma_{i,i}^{2} \neq 0\) for some \( 3 \leq i \leq n \), then by Lemma
     \ref{nilpotentLeibniz}, we have that \( L \to L_{4}(\alpha), L_{5} \). Therefore, we may assume
     \(\gamma_{1,i}^{2} = 0\) and \(\gamma_{i,i}^{2} = 0\) for any  \( 3 \leq i \leq n\).

    Moreover, if \(\gamma_{i,j}^{2} \neq 0\) for some \(i, j\) then taking the basis change \( e_{1}' =
    e_{1} + e_j\), we obtain that
    \begin{align*}
        &e_1 e_1 = e_2 + \sum_{i=3}^{n} (*) e_{i}, \quad e_{i} e_1 = \gamma_{i,j}^{2}e_2 + \sum_{i=3}^{n}
        (*) e_{i}, \\
        &e_i e_1 = \gamma_{j,i}^{2} e_2 + \sum_{i=3}^{n} (*) e_{i}, \quad e_{i} e_i =  \sum_{i=3}^{n} (*)
        e_{i}.
    \end{align*}
    If we apply Lemma \ref{nilpotentLeibniz} on the indices \( 1,i,2 \), we see that \( L \to L_4(\alpha), L
    _{5}\). Therefore we can suppose \(\gamma_{i,j}^{2} = 0\) for any \(3 \leq i, j \leq n\).

    By applying the Leibniz identity once again we see that
    \begin{displaymath}
        0 = (e_{i} e_{j}) e_1 - (e_{i} e_1) e_{j} - e_{i} (e_{j} e_1) = \gamma_{i,j}^{1} e_2 +
        \sum_{i = 1, i \neq 2}^{n} (*) e_{i},
    \end{displaymath}
    which means that \( \gamma_{i,j}^1 =0\) for \( 3 \leq i , j \leq n\).

    Therefore, we obtain that any Leibniz algebras $L$ either degenerates to \( L_{4}(\alpha), L_{5} \) or
    \(r_n\) or is a decomposed algebra with ideals \(M_1= \{e_1, e_2\}\) and \(M_2= \{e_3, e_4, \dots,
    e_n\}\). If \( M_{2}\) were trivial, then our only nontrivial multiplication would be \( e_1 e_1 = e_2
    \). This is a contradiction, as we assumed that \( L \not \iso \lambda_{2} \). Thus, \( M_2 \) is not
    trivial, and therefore \( M_2 \) degenerates to an algebra of level one: \( \lambda_{2} , n_{3}^{-} \)
    or \( p_{n}^{-} \). This means that \( L \) degenerates to \( \lambda_{2} \oplus \lambda_{2} \), \(
    \lambda_2 \oplus n_{3}^{-} \), or \( \lambda_{2} \oplus p_{n}^{-} \). By Example \ref{example1}, we
    conclude that \( L \) degenerates to either \( L_{4}(\alpha) \) or \( L_{5} \).
\end{proof}

\begin{theorem}
    Let \(L\) be an \(n\)-dimensional Leibniz algebra of level two. Then \( L \) is isomorphic to one of the
    following pairwise non-isomorphic algebras:
    \begin{displaymath}
        L_4(\alpha)\oplus a_{n-3}, \quad L_{5}\oplus a_{n-3} , \quad r_{n}, \quad \alpha \in \mathbb{C}
    \end{displaymath}
\end{theorem}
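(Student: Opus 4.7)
The plan is to combine Proposition \ref{prop1} with a verification that each family in the list has level exactly two. Since the Lie algebras of level two classified in Theorem \ref{khudo} are absent from the list, I interpret the statement as addressing non-Lie Leibniz $L$, whence also $L \not\simeq \lambda_{2}$ since the latter has level one.

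The main thread runs as follows. By Proposition \ref{prop1} there is a degeneration $L \to X$ with $X$ one of $L_{4}(\alpha) \oplus a_{n-3}$, $L_{5} \oplus a_{n-3}$, or $r_{n}$. Granting that each such $X$ itself has level exactly two, any non-trivial degeneration $L \to X$ decomposes (by standard orbit-dimension arguments) into a chain of direct degenerations of length at least one from $L$ to $X$ which, concatenated with a length-two chain $X \to \cdots \to \ab_{n}$, would produce a chain of length at least three from $L$ and contradict $\text{lev}_{n}(L) = 2$. Hence the degeneration $L \to X$ is trivial, i.e.\ $L \simeq X$. Pairwise non-isomorphism then follows from the three-dimensional nilpotent Leibniz classification \cite{3-dim} (distinguishing $L_{4}(\alpha)$ for different $\alpha$ as well as $L_{4}$ from $L_{5}$) together with the observation that $r_{n}$ is non-nilpotent whereas the other two families are.

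It remains to verify that $L_{4}(\alpha)$, $L_{5}$, and $r_{n}$ each have level two. For the lower bound I would exhibit explicit contractions to level-one algebras: in the nilpotent cases, the rescaling $g_{t}^{-1}(e_{2}) = t^{-1} e_{2}$ with $e_{1}, e_{3}$ fixed sends every product involving $e_{2}$ to zero in the limit and leaves $e_{1} e_{1} = e_{3}$, yielding $\lambda_{2} \oplus a_{n-2}$; for $r_{n}$ I would build a similar contraction onto an appropriate $\nu_{n}(\alpha)$ from Theorem \ref{omi-khudo}. For the upper bound, I would apply Proposition \ref{prop1} to any hypothetical non-trivial degeneration of $X$ and rule out inter-family degenerations using invariants such as the dimension of the right annihilator (separating $r_{n}$ from the nilpotent pair) and the $L \cdot L$ structure (separating $L_{4}(\alpha)$ from $L_{5}$). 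The main obstacle is precisely this upper-bound step, and especially the elimination of putative degenerations $L_{4}(\alpha) \to L_{4}(\alpha')$ for distinct $\alpha$: here Proposition \ref{prop1} alone does not suffice, and one must resort to more refined invariants of the orbit closure.
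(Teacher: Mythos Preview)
Your plan is essentially the paper's own argument: invoke Proposition~\ref{prop1} to force $L\to X$ with $X$ in the list, observe that each $X$ has level $\geq 2$ (it is neither abelian nor on the level-one list of Theorem~\ref{omi-khudo}), conclude $L\simeq X$, and then check that the list members do not degenerate to one another. Your logical scaffolding is in fact tidier than the paper's, which refers back to Theorem~\ref{mainProof} rather than to Proposition~\ref{prop1} and leaves the ``$\operatorname{lev}(X)\geq 2$'' step implicit.

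Where the paper differs from your sketch is in how the non-degeneration checks are dispatched. You propose the right annihilator to separate $r_n$ from the nilpotent pair; the paper does use $\dim\operatorname{Ann}_R$ for $r_n\not\to L_4(\alpha)$ with $\alpha\neq 0$ and $r_n\not\to L_5$, but this invariant fails for $r_n\not\to L_4(0)$ (where $\dim\operatorname{Ann}_R(L_4(0))=n-1$ as well), and the paper instead compares $\dim\operatorname{Der}$. For the reverse direction the paper simply notes that nilpotent algebras cannot degenerate to the non-nilpotent $r_n$. Your suggestion to use ``the $L\cdot L$ structure'' to separate $L_4(\alpha)$ from $L_5$ will not work as stated, since both have one-dimensional square; the paper does not attempt an invariant here at all but defers both $L_4(\alpha)\not\to L_5$ and $L_5\not\to L_4(\alpha)$ (and, implicitly, $L_4(\alpha)\not\to L_4(\alpha')$) to the degeneration tables of \cite{nilpotentLeibnizFive}. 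So the obstacle you correctly flag as the crux is resolved in the paper by citation rather than by a self-contained invariant computation.
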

\begin{proof}
    Due to Theorem \ref{mainProof}, it is sufficient to prove that these algebras do not degenerate to each
    other. To facilitate this, we compute the dimensions of right annihilator and derivations of these
    algebras and we call upon the following table:
    \begin{alignat*}{2}
        &\dim\rann(L_{5}) = n - 2, \qquad && \dim \text{Der}(L_4(0)) = n^2 - 3n + 4,\\
        & \dim\rann(L_{4}(\alpha)) = n - 2, \ \alpha
        \neq 0, \qquad && \dim \text{Der}(r_{n}) = (n - 1)^2 = n^2 - 2n + 1,\\
        & \dim\rann(r_{n}) = n - 1. &&
    \end{alignat*}
    We first note that \( L_{4}(\alpha) \) and \( L_{5} \) cannot degenerate to \( r_{n} \), as \(
    L_4(\alpha) \) and \( L_{5} \) are nilpotent and \( r_{n} \) is not. We also see that \( r_{n} \) does
    not degenerate to \( L_{4}(\alpha) \) (\( \alpha \neq 0 \)) or \( L_{5} \), as dimension of right
    annihilator of \( r_{n} \) is more than dimensions of right annihilators of \( L_{4}(\alpha) \) (\(
    \alpha \neq 0 \)) and \( L_{5} \).  Additionally, since for \( n \geq 4 \), the dimension of derivations
    of \( r_{n} \) is more than the dimension of derivations of \( L_{4}(0) \), we have that \( r_{n}
    \not\to L_{4}(0) \).  Lastly, we see that \( L_{4}(\alpha) \not\to L_{5} \) and that \( L_{5} \not\to
    L_{4}(\alpha) \) by the following paper \cite{nilpotentLeibnizFive}.
\end{proof}

\begin{remark}
    We note that in the context of left Leibniz algebras, the following algebra replaces the algebra \(
    r_{n} \) as a Leibniz algebra of level two:
    \begin{displaymath}
        \ell_{n} : \qquad e_1 e_{i} = e_{i}, \qquad 2 \leq i \leq n.
    \end{displaymath}
\end{remark}

\subsection{Nilpotent algebras of level two}

Working in the variety of $n$-dimensional nilpotent algebras $\text{Nil}_n ( \mathbb{C} )$ will allow us to exclude certain products from our multiplication tables, in particular all products of the form $xy = x$.

 \begin{theorem}\label{nilpotentclass}
Any $n$-dimensional $(n \geq 5)$ nilpotent algebra of level two is isomorphic to one of the following algebras:
$$\begin{array}{rlllll}
n_{5, 1}   : & e_1 e_2 = e_5, &  e_3e_4 = e_5, & e_2 e_1 = -e_5, &  e_4e_3 = - e_5;\\
n_{5, 2}  : & e_1 e_2 = e_4 , & e_1 e_3 = e_5,& e_2 e_1 = - e_4 , & e_3 e_1 = -e_5;\\
L_4 (\alpha)   :& e_1 e_1 = e_3, & e_2 e_2 = \alpha e_3, & e_1 e_2 = e_3;\\
L_5  : & e_1 e_1 = e_3 , & e_1 e_2 = e_3, & e_2 e_1 = e_3.
\end{array}
$$
\end{theorem}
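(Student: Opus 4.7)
The plan is to show that every nilpotent algebra of level two is a Leibniz algebra, which then forces it into one of the four listed algebras by combining Proposition \ref{prop1} (the nilpotent Leibniz targets $L_{4}(\alpha), L_{5}$; note $r_{n}$ is non-nilpotent and drops out) with the nilpotent part of Theorem \ref{khudo} (yielding $n_{5,1}, n_{5,2}$). All four listed algebras are themselves Leibniz, so this reduction is sound.

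The principal lever is Lemma \ref{nilpotentLeibniz}, used as a sieve. Let $L$ be nilpotent of level two and suppose $L \not\iso L_{4}(\alpha), L_{5}$. If some basis admits distinct $i,j,k$ for which $(\gamma_{i,i}^{k}, \gamma_{i,j}^{k}, \gamma_{j,i}^{k}, \gamma_{j,j}^{k})$ is outside the two allowed quadruples, then $L$ degenerates to $L_{4}(\alpha)$ or $L_{5}$; since those targets have level two, this forces $\text{lev}_{n}(L) \geq 3$, contradicting the hypothesis. Hence I may assume the sieve-condition holds in every basis.

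In the non-anti-commutative case there exists $x$ with $xx \neq 0$; nilpotence prevents $xx$ from being a scalar multiple of $x$ (else $x \in L^{k}$ for all $k$, forcing $x = 0$), so a basis with $e_{1} = x,\ e_{2} = xx$ is available. Following the matrix analysis of Proposition \ref{prop1} on the operator $\varphi_{x}$ but with Lemma \ref{nilpotentLeibniz} replacing each invocation of the Leibniz identity---each stray nonzero entry in $\varphi_{x}$ presents a triple $(i,j,k)$ violating the sieve---I would drive $L$ to a decomposition $L \iso \lambda_{2} \oplus M$, with $M$ either trivial (impossible, as $L \iso \lambda_{2}$ has level one) or of level one. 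In the latter case Example \ref{example1} degenerates $L$ into $L_{4}(\alpha)$ or $L_{5}$, again contradicting $\text{lev}_{n}(L) = 2$.

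In the anti-commutative case every quadruple is automatically $(0, \beta, -\beta, 0)$, so the sieve is vacuous. Here my plan is to pass directly to Jacobi: if some Jacobiator $J(e_{i}, e_{j}, e_{k}) \neq 0$, I would construct an explicit family $g_{t} \in \gl$ degenerating $L$ onto one of the four listed algebras that $L$ is not already isomorphic to, so that $\text{lev}_{n}(L) \geq 3$. Once Jacobi is secured, $L$ is a nilpotent Lie algebra of level two, and Theorem \ref{khudo} identifies $L$ as $n_{5,1}$ or $n_{5,2}$. This anti-commutative subcase is the principal obstacle, since Lemma \ref{nilpotentLeibniz} gives no leverage and the degeneration families must be produced by hand from the Jacobiator---a counterpart to the explicit matrices in Example \ref{example1}, but executed in the absence of a symmetric obstruction to Leibniz.
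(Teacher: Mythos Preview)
Your proposal leaves the antisymmetric case genuinely open---you say so yourself (``the principal obstacle\ldots must be produced by hand'')---and the route you sketch there is a detour the paper never takes. The paper does \emph{not} verify Jacobi and then invoke Theorem~\ref{khudo}. Instead it splits on $\dim A^{2}$. If $\dim A^{2}=1$ then nilpotence gives $A^{3}=0$, so every Jacobiator term $(xy)z$ lies in $A^{3}=0$ and $A$ is Lie for free; \cite{leveltwo} then yields $n_{5,1}$. If $\dim A^{2}\geq 2$, the paper shows by direct basis manipulation that \emph{any} antisymmetric nilpotent algebra---Lie or not---admits a basis with $e_{1}e_{2}=e_{4}$, $e_{1}e_{3}=e_{5}$, and from there a simple scaling $g_{t}$ degenerates $A$ to $n_{5,2}$. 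Your Jacobiator obstruction is thus bypassed, not confronted.

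In the non-antisymmetric case your plan to rerun the $\varphi_{x}$-matrix argument of Proposition~\ref{prop1}, substituting Lemma~\ref{nilpotentLeibniz} for every use of the Leibniz identity, is optimistic. Several steps of that proof use Leibniz not to expose a structure constant amenable to the sieve but to place $\varphi_{x}(y)$ and $xx$ in $\rann(L)$, and later to force $\gamma_{1,i}^{1}=\gamma_{i,j}^{1}=0$; these are structural facts that do not obviously reduce to a quadruple outside $\{(0,\beta,-\beta,0),(\delta,\beta,\beta,\beta^{2}/\delta)\}$. The paper does not reuse Proposition~\ref{prop1} here at all: it runs a separate case analysis on $\dim A^{2}$. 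When $\dim A^{2}=1$ the reduction to $\lambda_{2}\oplus M$ you envisage does occur, and Example~\ref{example1} finishes it. When $\dim A^{2}\geq 2$ the paper tracks how the products $e_{i}e_{1}$, $e_{1}e_{i}$, $e_{i}e_{j}$ sit relative to $\operatorname{span}\{e_{k+1}\}$ inside $A^{2}$, applying Lemma~\ref{nilpotentLeibniz} directly or degenerating to $\lambda_{2}\oplus\lambda_{2}$ or $\lambda_{2}\oplus n_{3}^{-}$ (then Example~\ref{example1}) in each subcase.
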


\begin{proof}  Our overall strategy is to look separately at antisymmetric and non-antisymmetric cases, and then at the way products fall into the square of the algebra $A^2$.

\textbf{Case {1.}} First we assume that $A \in \text{Nil}_n
(\mathbb{C} )$ is non-antisymmetric.

\textbf{Case {1.1.}} Assume that $\text{dim} A^2 = 1$, then we
assume that $A^2 = \{e_n\}$ and have the following multiplication
\[A :\left\{
\begin{array}{lll}
e_1e_1 = e_n, & e_i e_j = \alpha_{i,j} e_n , & 2 \leq i,j
\leq n-1, \\
e_1 e_j = \alpha_{1,j} e_n , & e_j e_1 = \alpha_{j,1} e_n ,& 2 \leq
j \leq n-1.
\end{array} \right.
\]

If there exist $i$ such that $| \alpha_{1i} - \alpha_{i1} | + |
\alpha_{ii} - \alpha_{1i} \alpha_{i1} | \neq 0$, then by Lemma
\ref{nilpotentLeibniz} we obtain that $L$ degenerates to $L_4
(\alpha)$ or $L_5$.

Now let $|\alpha_{1i} - \alpha_{i1} | + | \alpha _{ii} - \alpha_{1i} \alpha_{i1} | = 0$ for $1 \leq i \leq n$.
 Making the change of basis $$e_1 ' = e_1, \hspace{0.2cm} e_i ' = e_i + \alpha_{1,i} e_1, \hspace{0.4cm} 2 \leq i \leq n-1$$
the multiplication of $A$ simply becomes $$(*) \hspace{0.5cm} e_1
e_1 = e_n , \hspace{0.2cm} e_1 e_i=e_i e_1 =e_i e_i =0,
\hspace{0.2cm} e_i e_j = \alpha_{ij} e_n, \hspace{0.4cm} 2 \leq i, j
\leq n -1.$$ We consider now the subalgebra $M : \{ e_2 , ..., e_n
\}$. Note that $M$ cannot be abelian, since otherwise $(*)$ becomes
an algebra $\lambda _2$, which is level one. Thus we have
the following two subcases, which will complete the case $\text{dim} A^2
= 1$.

\textbf{Case {1.1.1.}} Assume that $M$ is Lie. Since $M$ is also not
abelian, we are free to choose $\alpha_{2,3} = 1,$ $\alpha_{3,2} = -1$. Taking the
degeneration
\[
\begin{cases}
g_t(e_1) = t^{-2} e_n, &
g_t(e_2) =2t^{-1}e_2 - t^{-2} e_n,\\
g_t(e_3) =t^{-1} e_1 - t^{-2} e_n,&
g_t(e_n) =t^{-2} e_3,\\
g_t(e_i) = t^{-2} e_i, & 4 \leq i \leq n-1,
\end{cases}
\]
we obtain that $A$ degenerates to $L_4(\frac 1 4)$.

\textbf{Case {1.1.2.}} Assume that $M$ is non-Lie. Then  we may
assume $e_2 e_2 \neq 0,$ moreover $e_2 e_2 = e_n$. Taking the
degeneration
\[
g_t(e_1) = e_1, \quad
g_t(e_2) =e_2,\quad
g_t(e_n) = e_n,\quad
g_t(e_i) = t^{-1} e_i, \quad  3 \leq i \leq n-1,
\]
we obtain that $A$ degenerates to $L_5$.

\textbf{Case {1.2.}} We now assume that $\text{dim} A^2 \geq 2$. Let
$A= \{e_1, \ldots , e_n \}$, and $A^2 = \{e_{k+1} , \ldots , e_n
\}$. We consider five logically exhaustive cases in which the
products $e_ie_1$, $e_1 e_i$ fall in the square $A^2$ in different
ways. It is obvious that we may always assume $e_1 e_1 = e_{k+1}$.

\textbf{Case {1.2.1.}} Assume $e_i e_1 \in span\{e_{k+1}\} $
for $1 \leq i \leq k$, and let $e_1 e_2 \notin span\{ e_{k+1}
\}$, so that we have the multiplication:
$$e_1 e_1 = e_{k+1}, \hspace{0.3cm} e_1 e_2 = e_{k+2}, \hspace{0.3cm} e_i e_1 = \gamma_{i,1} ^{k+1} e_{k+1}, \hspace{0.2cm} 2 \leq i \leq k, $$
$$ e_i e_j = \sum_{\ell = k+1} ^n \gamma_{i,j} ^{\ell} e_{\ell}, \hspace{0.6cm} 2 \leq i , \hspace{0.1cm} j \leq k.$$

Applying the Lemma \ref{nilpotentLeibniz} for the elements
$(\gamma_{1,1} ^{k+2}, \gamma_{1,2} ^{k+2}, \gamma_{2,1} ^{k+2},
\gamma_{2,2} ^{k+2}) = (0,0,1, \gamma_{2,2} ^{k+2})$ we obtain that
$A$ degenerates to $L_4 (\alpha)$ or $L_5$.

\textbf{Case {1.2.2.}} Let there exist $i$ such that $e_ie_1 \notin
span \{ e_{k+1} \}$. Without loss of generality, we may put
$e_2e_1 = e_{k+2}$, so that our products are
$$ e_1 e_1 = e_{k+1}, \hspace{0.6cm} e_2 e_1 = e_{k+2}, \hspace{0.6cm} e_i e_1 = \sum_{\ell = k+1} ^n \gamma_{i,1} ^{\ell} e_{\ell}, \hspace{0.6cm} 3 \leq i  \leq k,$$
$$ e_1 e_i = \sum_{\ell = k+1} ^n \gamma_{1,i} ^{\ell} e_{\ell}, \hspace{0.3cm} 2 \leq i  \leq k, \hspace{0.8cm}
e_i e_j = \sum_{\ell = k+1} ^n \gamma_{i,j} ^{\ell} e_{\ell}, \hspace{0.3cm} 2 \leq i , \hspace{0.1cm} j \leq k.$$

If $\gamma_{1,2} ^{k+2} \neq -1,$ then  applying
the Lemma \ref{nilpotentLeibniz} for the $(\gamma_{1,1}
^{k+2}, \gamma_{1,2} ^{k+2}, \gamma_{2,1} ^{k+2}, \gamma_{2,2}
^{k+2}) = (0, \gamma_{1,2} ^{k+2}, 1, \gamma_{2,2} ^{k+2})$ we obtain that $L$ degenerates to
$L_4 (\alpha)$ or $L_5$.

If $\gamma_{1,2} ^{k+2} = -1,$ then taking the change of basis
$$e_2 ' =  e_2 + \eta e_1, \ e_{k+2} ' =  e_{k+2} + \eta e_{k+1}, \  e_i' = e_i , \ 1 \leq i (i \neq 2, k+2) \leq n,$$
we obtain that
$$ e_1' e_1' = e_{k+1}', \hspace{0.6cm} e_2' e_1' = e_{k+2}',
\hspace{0.6cm} e_1' e_2' =  (2\eta + \gamma_{1,2} ^{k+1})e_{k+1}' -
e_{k+2}' + \sum_{\ell = k+3} ^n \gamma_{1,i} ^{\ell} e_{\ell},
$$
$$e_i' e_1' = \sum_{\ell = k+1} ^n \gamma_{i,1}
^{\ell} e_{\ell}', \hspace{0.6cm} e_1' e_i' = \sum_{\ell = k+1}^n
\gamma_{1,i} ^{\ell} e_{\ell}', \hspace{0.6cm} 3 \leq i  \leq k,$$
$$e_i' e_j' = \sum_{\ell = k+1} ^n  \gamma_{i,j} ^{\ell} e_{\ell}' \hspace{0.6cm} 2 \leq i , \hspace{0.1cm} j \leq k.$$

Taking the value of $\eta$ such that ${\gamma_{1,2} ^{k+1}}' =  2\eta +
\gamma_{1,2} ^{k+1} \neq 0$ we apply the Lemma
\ref{nilpotentLeibniz} for the elements $({\gamma_{1,1} ^{k+1}}',
{\gamma_{1,2} ^{k+1}}, {\gamma_{2,1} ^{k+1}}', {\gamma_{2,2}
^{k+2}}') = (1, {\gamma_{1,2} ^{k+1}}', 0, {\gamma_{2,2} ^{k+2}}')$
and obtain that $A$ degenerates to $L_4 (\alpha)$ or $L_5$.

\textbf{Case {1.2.3.}} Now suppose that $e_i e_1 , e_1 e_i \in
span \{ e_{k+1} \}$ for $1 \leq i \leq k$ and there exist $j$
such that $e_j e_j \notin span \{ e_{k+1} \}$. In this case
without loss of generality, we may suppose $e_2 e_2 = e_{k+2}$, so
that our multiplication becomes
$$ e_1 e_1 = e_{k+1}, \ e_2 e_2 = e_{k+2}, \ e_i e_1 = \gamma _{i,1} ^{k+1} e_{k+1}, \ e_1 e_i = \gamma _{1,i} ^{k+1} e_{k+1}, \hspace{0.3cm} 2 \leq i \leq k,$$
$$e_i e_j = \sum_{\ell = k+1} ^n \gamma _{i,j} ^{\ell} e_{\ell}, \hspace{0.3cm} 3 \leq i, \hspace{0.2cm} j \leq k.$$

If $(\gamma _{1,2} ^{k+1}, \gamma _{2,1} ^{k+1}) \neq (0,0)$ then
applying the Lemma \ref{nilpotentLeibniz} for the $(\gamma_{1,1}
^{k+1}, \gamma_{1,2} ^{k+1}, \gamma_{2,1} ^{k+1}, \gamma_{2,2}
^{k+1}) = (1, \gamma_{1,2} ^{k+1}, \gamma_{2,1} ^{k+1}, 0)$ we
obtain that $A$ degenerates to $L_4 (\alpha)$ or $L_5$.

If $\gamma _{1,2} ^{k+1} = \gamma _{2,1} ^{k+1} = 0,$ then taking
the degeneration
\[g_t(e_1) = t^{-1}e_1, \quad g_t(e_2) = t^{-1}e_2, \qquad g_t(e_i) = t^{-2}e_i, \quad 3 \leq i \leq n,\]
we degenerate to the algebra
$$\lambda_2 \oplus \lambda_2 : e_1 e_1 = e_{k+1}, \hspace{0.2cm} e_2 e_2 = e_{k+2}.$$

By Example \ref{example1}, we obtain that algebra $\lambda_2 \oplus \lambda_2$
degenerates to the algebra $L_5.$

\textbf{Case {1.2.4.}} Now we suppose that $e_i e_i , e_1 e_i, e_i
e_1 \in span \{ e_{k+1} \} $ for $1 \leq i \leq k$. Let there
exist $i, j \ (2 \leq i,j \leq k)$ such that $e_{i} e_{j} \notin
span \{ e_{k+1} \}$. Without loss of generality we can suppose
$i=2, j=3$ moreover, if  $e_{2} e_{3} + e_{3} e_{2} \neq 0$, then
taking $e_2' = e_{2} + e_{3}$, we get that $e_2'e_2' = e_2e_2 +
e_2e_3 + e_3e_2 + e_3e_3 \notin \{ e_{k+1} \}$ which have
the situation of Case 1.2.3. Therefore, we may suppose
$$e_1e_1 = e_{k+1}, \ e_1e_2 = \alpha_{1,2}e_{k+1}, \ e_2e_1 =
\alpha_{2,1}e_{k+1}, \  e_2e_2 = \alpha_{2,2}e_{k+1}, $$
$$e_1e_3 = \alpha_{1,3}e_{k+1}, \ e_3e_1
= \alpha_{3,1}e_{k+1}, \  e_3e_3 = \alpha_{3,3}e_{k+1}, $$
$$e_2e_3 = e_{k+2}, \ e_3e_2 = - e_{k+2}.$$

Now applying the degeneration
\begin{displaymath}
\begin{cases}
g_t(e_1) = t^{-2} e_1, & g_t(e_{2}) = t^{-3} e_2, \\  g_t(e_{3}) = t^{-3} e_{3}, & g_t(e_{k+1}) = t^{-4} e_{k+1},
\\ g_t(e_{i}) = t^{-5} e_i, & 4 \leq i \leq n.
\end{cases}
\end{displaymath}

we obtain the products
$$e_1 e_1 = e_{k+1}, \hspace{0.2cm} e_2 e_3 = e_{k+2}, \hspace{0.2cm} e_3 e_2 = - e_{k+2}.$$
which by Example \ref{example1} degenerates to the algebra $L_5$.

\textbf{Case {1.2.5.}} Now we suppose that $e_i e_j \in span \{
e_{k+1} \} $ for all $i,j \ (1 \leq i, j \leq k)$. Thus we have
$$e_1e_1 = e_{k+1}, \quad e_ie_j = \alpha_{i,j}e_{k+1}.$$
Since $dim A^2 \geq 2,$ then we have that there exist $i \ (2 \leq i
\leq k),$ such that $e_ie_{k+1}$ or $e_{k+1}e_i$ is non zero, which
we can suppose as $e_{k+2}.$

Let $i=1,$ then we have $e_1e_1 = e_{k+1}$ , $e_1e_{k+1} = e_{k+2}$.

If  $e_{k+1}e_1 + e_1e_{k+1} \neq 0,$ then using the Lemma
\ref{nilpotentLeibniz} for the basis elements $\{e_1, e_{k+1},
e_{k+2}\}$ we have that $A$ degenerates to the algebra $L_4(\alpha)$
or $L_5.$

If  $e_{k+1}e_1 = - e_1e_{k+1} = - e_{k+2}$ then making the change
$e_{k+1}' = e_{k+1}-e_{k+2},$ we have that
$$e_1 e_1 = e_{k+1} + e_{k+2}, \quad e_1e_{k+1} = e_{k+2}, \quad e_{k+1}e_1 = -e_{k+2}.$$
Again applying the Lemma \ref{nilpotentLeibniz} for the basis
elements $\{e_1, e_{k+1}, e_{k+2}\},$ i.e. $(\gamma_{1,1}^{k+2},
\gamma_{1,k+1}^{k+2}, \gamma_{k+1, 1}^{k+2},\gamma_{k+1,k+1}^{k+2})
= (1, 1, -1, \gamma_{k+1,k+1}^{k+2} )$ we have that $A$ degenerates
to the algebra $L_4(\alpha)$ or $L_5.$

Let $i\neq 1,$ then we can suppose $i=2$ and we have $e_1e_1 =
e_{k+1}$ , $e_2e_{k+1} = e_{k+2}$.

Similarly to the case $i=1$ if  $e_{k+1}e_2 + e_2e_{k+1} \neq 0,$
then using the Lemma \ref{nilpotentLeibniz} for the basis elements
$\{e_2, e_{k+1}, e_{k+2}\}$ we have that $A$ degenerates to the
algebra $L_4(\alpha)$ or $L_5.$

If  $e_{k+1}e_2 = - e_2e_{k+1} = - e_{k+2}$ then making the change
$e_{k+1}' = e_{k+1}-e_{k+2},$ we have that
$$e_1 e_1 = e_{k+1} + e_{k+2}, \quad e_1e_{k+1} = e_{k+2}, \quad e_{k+1}e_1 = -e_{k+2}.$$
Again applying the Lemma \ref{nilpotentLeibniz} for the basis
elements $\{e_1, e_{k+1}, e_{k+2}\},$ i.e. $(\gamma_{1,1}^{k+2},
\gamma_{1,k+1}^{k+2}, \gamma_{k+1, 1}^{k+2},\gamma_{k+1,k+1}^{k+2})
= (1, 1, -1, \gamma_{k+1,k+1}^{k+2} )$ we have that $A$ degenerates
to the algebra $L_4(\alpha)$ or $L_5.$

\textbf{Case {2.}} Let $A \in \text{Nil}_n ( \mathbb{C} )$ be antisymmetric.
It should be noted that if $dimA^2 =1,$ then we have that $A^3=0.$ Thus, $A$ is a Lie algebra. In \cite{leveltwo} it is shown that any nilpotent Lie algebra with condition $dimA^2 =1,$  $A^3=0$ degenerate to algebra $n_{5,1}.$

 Therefore we consider case $\dim A^2 \geq 2$. Assume that
 $\{ e_1, e_2, \dots , e_k, e_{k+1},\dots, e_n\}$ be a basis of $A$,
 and $\{e_{k+1}, e_{k+2}, \dots, e_n\}$ be a basis of $A^2$.

 Then, without loss of generality, we can assume $e_1 e_2 = e_{k+1},$ $e_2 e_1 = - e_{k+1}.$

Below, we show that it may always be assumed $$e_1 e_2 = e_4, \
e_1 e_3 = e_5.$$

\begin{itemize}
\item Let there exists $i_0$ such that $e_1 e_{i_0} \notin
span\langle x_{k+1}\rangle.$ Then taking
$$e'_1= e_1, \  e'_2 = e_2, \ e'_3 = e_{i_0}, \ e'_4 = e_{k+1}, \ e'_5 = e_1 e_{i_0} $$
we obtain $e'_1 e'_2 = e'_4, \ e'_1 e'_3 = e'_5.$

\item Let $e_1 e_{i} \in span \{ e_{k+1}\}$ for all $3 \leq i \leq k$ and
there exists some $i_0$ such that $e_2 e_{i_0} \notin
span\langle x_{k+1}\rangle.$ Due to symmetrically of $e_1$ and
$e_2,$ similarly to the previous case we can choose a basis
$\{e'_1, e'_2, \dots, e'_n\}$ with condition $e'_1 e'_2 = e'_4,
\ e'_1 e'_3 = e'_5.$

\item Let $x_1 x_{i},\  x_2 x_{i} \in span \{ e_{k+1}\}$ for all $3 \leq i \leq
k.$ We set $e_1 e_{i} = \alpha_{i}e_{k+1}$ and $e_2 e_{i} =
\beta_{i}x_{k+1}.$ Let $e_{i_0}$ and $e_{j_0}$ be generators of
$A$ such that $e_{i_0} e_{j_0} \notin span \{
e_{k+1}\}.$ Since $dim A^2 \geq 2$ one can assume $e_{i_0}
e_{j_0} = e_{k+2}.$

Putting $$e'_1 = e_1 + Ae_{i_0},  \ e'_2= e_2,  \ e'_3 = e_{j_0},
\ e'_{4} = (1-A\beta_{i_0})e_{k+1}, \ e'_5 = Ae_{k+2} +
\alpha_{i_0} e_{k+1}$$ with $A(1-A\beta_{i_0}) \neq 0,$ we deduce
$e'_1 e'_2 = e'_4, \ e'_1 e'_3 = e'_5.$

\item Let $e_i e_{j} \in span \{ e_{k+1}\}$ for all $1 \leq i,j \leq
k.$ Then for some $i_0$ we have $e_{i_0} e_{k+1} \neq 0.$
Without loss of generality, one can assume $e_1 e_{k+1} =
e_{k+2}.$
    \begin{itemize} \item If $k\geq 3,$ then setting
    $$e'_1 = e_1,  \ e'_2= e_2,  \ e'_3 =
    e_{3}+e_{k+1}, \ e'_{4} = e_{k+1}, \ e'_5 = e_{k+2} + \alpha_{1,3} e_{k+1},$$ we obtain $e'_1 e'_2 = e'_4, \ e'_1 e'_3 =
e'_5.$

    \item If $k=2,$ then we have $e_1 e_2=e_3, \ e_1e_3=e_4.$
It is not difficult to obtain that
    $e_1e_4=e_5$ or $e_2e_3=e_5$ (because of $n \geq 5$). Indeed, taking $$e'_1 = e_1,  \ e'_2= e_2,  \ e'_3 =  e_{4}, \ e'_{4} = e_{3}, \ e'_5 = e_{5}$$ in the case of $e_1e_4=e_5$ and
    $$e'_1 = -e_3,  \ e'_2= e_1,  \ e'_3 =e_{4}, \ e'_{4} = e_{2}, \ e'_5 = e_{5}$$ in the case of $e_2e_3=e_5,$ we derive the products $e_1 e_2 = e_4, \ e_1 e_3 = e_5.$

    \end{itemize}
\end{itemize}

Thus, there exists a basis $\{e_1, e_2, e_3, \dots, e_n\}$ of $A$
with the products $$e_1 e_2 = e_4, \ e_1 e_3 = e_5.$$

Note that $A$ degenerates to the algebra with multiplication:
$$e_1 e_2 = e_4, \ e_1 e_3 = e_5, \ e_2 e_3 = \gamma_4 x_4 + \gamma_4 x_5$$
via the following degeneration:
\[g_t: \left\{\begin{array}{llll} g_t(e_1)= t^{-2} e_1,& g_t(e_2)= t^{-2} e_2,& g_t(e_3)= t^{-2}
e_3,&\\
g_t(e_4)= t^{-4} e_4,& g_t(e_5)= t^{-4} e_5,&
g_t(e_{i})=t^{-3}e_{i},  & 6 \leq i \leq n. \end{array}\right. \]

From the change of basis $e_2'=e_2 - \gamma_5e_1,$ $e_3'=e_3 +
\gamma_4e_1,$ we obtain that this algebra is isomorphic to
$n_{5,2} \oplus \mathfrak{a}_{n-5}.$
\end{proof}

\end{document}